\def\C{\mathbb{ C}}
\def\L{\mathscr{ L}}
\def\N{\mathbb{ N}}
\def\Q{\mathbb{Q}}
\def\epsi{\mathcal{\varepsilon}}
\newcommand{\half}{\tfrac12}
\newcommand{\ab}[2]{\genfrac{}{}{0pt}{}{#1}{#2}}
\newtheorem{thm}{Theorem}
\newtheorem{lem}[thm]{Lemma}
\newtheorem{prop}[thm]{Proposition}
\newtheorem{cor}[thm]{Corollary}
\newtheorem{remark}{Remark}
\numberwithin{equation}{section}
\newcommand{\pf}{{\bf Proof: \/}}
\def\qed{\nobreak\quad\raise -2pt\hbox{\bf\vrule\vbox to 10pt{\hrule width 6pt \vfill\hrule}\vrule}\par\vspace{2ex}}
\title
[Moments of Orthogonal Polynomials]{Moments of Orthogonal Polynomials and Exponential Generating Functions}
\author{Ira M. Gessel}
\address[Ira M. Gessel]{Department of Mathematics, Brandeis University, Waltham, MA 02453, USA}
\email{gessel@brandeis.edu}
\author{Jiang Zeng}
\address[Jiang Zeng]{Universit\'{e} de Lyon; Universit\'{e} Lyon 1; Institut Camille Jordan; UMR 5208 du CNRS; 43, boulevard du 11 novembre 1918, F-69622 Villeurbanne Cedex, France}
\email{zeng@math.univ-lyon1.fr}
\dedicatory{ Dedicated to the memory of Richard Askey}
\begin{document}

\begin{abstract} Starting from the moment sequences of classical orthogonal polynomials we derive the orthogonality purely algebraically. We consider also the moments of ($q=1$) classical
orthogonal polynomials, and study those cases in which the exponential generating function has a nice form. 
In the opposite direction,  we show that the  generalized Dumont-Foata polynomials with six parameters 
 are the moments of  rescaled 
continuous dual Hahn polynomials. Finally we  show that 
one of our 
methods   can be applied to deal with the moments of  Askey-Wilson polynomials.
\end{abstract}

\maketitle
\section{Introduction}

Many of the most important sequences in enumerative combinatorics---the
factorials, derangement numbers, Bell numbers, Stirling polynomials, secant numbers, tangent
numbers, Eulerian polynomials, Bernoulli numbers, and Catalan
numbers---arise as moments of well-known orthogonal polynomials.
With the exception of the Bernoulli and Catalan numbers,
these orthogonal polynomials are all Sheffer type; see \cite{Vi83, Ze92}.
One characteristic of these sequences is that their ordinary
generating functions have simple continued fractions.
For some recent work on the moments of classical orthogonal polynomials we refer the reader to 
\cite{Cha, Dom, NKF, CKS, CSSW, KS}.

There is another sequence which appears in a number of enumerative
applications, and which also has a simple continued fraction.
The {\sl Genocchi numbers} may be defined by
$$
\sum_{n=0}^{\infty}G_n\frac{x^n}{n!}
=\sum_{n=0}^{\infty}G_{2n+2}\frac{x^{2n+2}}{(2n+2)!}
=x \tan\frac x2.
$$
So $G_n=0$ when $n$
is odd or $n=0$, $G_2=1$, $G_4=1$, $G_6=3$, $G_8=17$, $G_{10}=155$,
and $G_{12}=2073$.

A closely related sequence is the median Genocchi numbers $H_{2n+1}$, which first appeared in Seidel's work \cite{Sei}.
These numbers 
 do not seem to have a simple exponential generating function
 and may be defined by $H_1=1$, and for $n\geq 1$,
\begin{equation}
H_{2n+1}=\sum_{k=1}^{\lfloor (n+1)/2\rfloor}(-1)^{k-1} {n\choose 2k-1} G_{2n+2-2k}.
\end{equation}
So $H_1=1$, $H_3=1$, $H_5=2$, $H_7=8$, $H_7=56$, $H_{11}=608$, 
$H_{13}=9440$; see \cite{Vi81, DZ, Fe, Ge}.

A comprehensive discussion of the combinatorial properties of Genocchi
numbers has been given by Viennot \cite{Vi81}.
In particular, he showed  that the Genocchi numbers and median Genocchi numbers $H_{2n+1}$ have the S-fraction expansions
\begin{align}
\sum_{n=0}^{\infty}G_{2n+2}\,t^{2n}&=S(t^2; 1^2, 1\cdot 2, \, 2^2,\, 2\cdot 3,\, 3^2,\, 3\cdot 4,\, 4^2,\,\ldots)\notag\\
  &=\cfrac{1}{1-
  \cfrac{1^2\,t^2}{1-
  \cfrac{1\cdot 2\, t^2} {1-
  \cfrac{2^2\, t^2} {1-
  \cdots
  }}}}
\label{genocchi}\\
\sum_{n=0}^{\infty} H_{2n+1}\,t^{2n}&=S(t^2; \, 1^2,\, 1^2,\, 2^2,\,2^2,\,3^2,\, 3^2,\,\ldots)\notag\\
  &=\cfrac{1}{1-
  \cfrac{1^2\,t^2}{1-
  \cfrac{1^2\, t^2} {1-
  \cfrac{2^2\, t^2} {1-
  \cdots
}}}}
\label{mgenocchi}
\end{align}
Some recent papers (see \cite{Bi,He, LW}) have 
shown  there are renewed interest on Genocchi  numbers and median Genocchi numbers.

In his combinatorial approach to orthogonal polynomials \cite[p.~V-10]{Vi83},
Viennot briefly alludes to the monic orthogonal polynomials 
whose moments are the Genocchi
numbers (i.e., the $n$th moment is $G_{2n+2}$) and  
the median Genocchi numbers $H_{2n+1}$
but he does not give any explicit formula for them.

Many years ago, one of the authors (IG) was learning about continued fractions and their connection to orthogonal polynomials, and saw in  Viennot's work \cite{Vi83} the simple continued fraction for the Genocchi numbers. He wondered what the corresponding  orthogonal polynomials were and wrote to Richard Askey, asking if he knew. Askey immediately wrote back to say that they were a special case of the continuous dual Hahn polynomials, and this paper grew out of an attempt to understand his reply.

As the Wilson polynomials are the most general ($q=1$) classical orthogonal polynomials, we shall first consider  the moments of the Wilson polynomials. In the general case, there are four parameters $a,b,c,d$ and 
the ordinary  generating function of these moments
can be expressed as a hypergeometric series.  We show that these moments have a simple exponential generating function when $a=0$ or  $a=1/2$.

We then consider the continuous dual Hahn polynomials and the (continuous) Hahn polynomials along with their rescaled versions. In particular, we show that the moments of
a rescaled version of the continuous dual Hahn polynomials  are  the generalized Dumont-Foata polynomials, which are a refinement of both Genocchi numbers and median Genocchi numbers. It would be possible to derive the moment generating function for Hahn polynomials from that for Wilson polynomials, but we will give a separate derivation and use our method for variety. Moreover, we show 
show that the latter  method can be applied to derive immediately 
a formula for  the moments of 
the Askey-Wilson polynomials.

\section{Moments of Wilson polynomials}
The monic Wilson polynomials  $W_n(x)$ (see \cite{AAR, Wi,KLS})
are defined by
\begin{equation}\label{wilson}
W_n(x^2)=(-1)^n \frac{(a+b)_n (a+c)_n(a+d)_n}{(a+b+c+d+n-1)_n}\widetilde{W}(x^2;a,b,c,d) \end{equation}
where
$$
\widetilde{W}(x^2;a,b,c,d)={}_4F_3\left(\begin{array}{c}
-n, \,n+a+b+c+d-1, \,a+ix,\,a-ix\cr
a+b,\,a+c, \,a+d
\end{array};1\right).
$$
If $a,b,c,d$ are positive or 
$a=\bar b$ and/or $c=\bar d$ and the real parts are positive, 
 then the orthogonality  reads as follows: 
\begin{subequations}
\begin{multline}
\label{wilson-ortho}
\quad \frac{1}{2\pi}\int_0^\infty\left|
\frac{\Gamma(a+ix)\Gamma(b+ix)\Gamma(c+ix)\Gamma(d+ix)}
{\Gamma(2ix)}\right|^2W_m(x^2)W_n(x^2)\,dx\\
=
\frac{n!\,\Gamma(n+a+b)\cdots \Gamma(n+c+d)}
 {(n+a+b+c+d-1)_n\,\Gamma(2n+a+b+c+d)}\,\delta_{mn}.
\quad
\end{multline}
Consider the moment sequence
$w_n(a):=w_n(a,b,c,d)$ of the Wilson polynomials defined by
\begin{multline}\label{wmoment}
\quad\frac{1}{2\pi}\int_0^\infty\left|\frac{\Gamma(a+ix)
\Gamma(b+ix)\Gamma(c+ix)\Gamma(d+ix)}
{\Gamma(2ix)}\right|^2x^{2n}dx\\
=w_{n}(a){\Gamma(a+b)\Gamma(a+c)\Gamma(a+d)
\Gamma(b+c)\Gamma(b+d)\Gamma(c+d)\over \Gamma(a+b+c+d)}.
\quad
\end{multline}
\end{subequations}
It follows that $w_0(a)=1$ and for $n\geq 0$,
\begin{equation}
w_{n+1}(a)=
{(a+b)(a+c)(a+d)\over a+b+c+d}w_n(a+1)-a^2 w_n(a).\label{eq:moment}
\end{equation}

\begin{prop} There holds
\begin{align*}
\sum_{n\geq 0}w_n(a)t^n&=\sum_{n\geq 0}{(a+b)_n(a+c)_n(a+d)_nt^{n}\over
(a+b+c+d)_n\prod_{l=0}^n(1+(a+l)^2t)}\\
  &={1\over 1+a^2t}\;
{}_4F_3\left(\begin{array}{c}
a+b, \,a+c, \,a+d,\,1\cr
a+b+c+d,\,a+1+{i/\sqrt{t}}, \,a+1-{i/\sqrt{t}}
\end{array};1\right).
\end{align*}
\end{prop}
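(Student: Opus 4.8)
The plan is to convert the three-term recurrence \eqref{eq:moment} into a shift equation for the generating function and then check that both proposed closed forms satisfy it. Write $F(a)=\sum_{n\geq 0}w_n(a)\,t^n$, viewed as a formal power series in $t$ whose coefficients are functions of $a$ (with $b,c,d$ fixed). Multiplying \eqref{eq:moment} by $t^{n+1}$, summing over $n\geq 0$, and using $w_0(a)=1$ yields $F(a)-1=\tfrac{(a+b)(a+c)(a+d)}{a+b+c+d}\,t\,F(a+1)-a^2t\,F(a)$, which I rearrange as
\[
(1+a^2t)\,F(a)=1+\frac{(a+b)(a+c)(a+d)}{a+b+c+d}\,t\,F(a+1).\qquad(\ast)
\]

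To prove the first equality, let $G(a)$ denote the right-hand sum. Each factor $1/(1+(a+l)^2t)$ is a well-defined power series in $t$, so $G(a)$ is a formal power series whose $t^n$-coefficient receives contributions only from the first $n+1$ summands; its constant term is $1$. I would verify that $G$ satisfies $(\ast)$: the $n=0$ term of $G(a)$ is $1/(1+a^2t)$, and for the tail ($n\geq 1$) I reindex $n\mapsto n+1$ and apply $(a+b)_{n+1}=(a+b)(a+1+b)_n$ (and the analogues for $a+c$, $a+d$, and for $(a+b+c+d)_{n+1}$), together with $\prod_{l=0}^{n+1}(1+(a+l)^2t)=(1+a^2t)\prod_{l=0}^{n}(1+(a+1+l)^2t)$. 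This exhibits the tail as exactly $\tfrac{(a+b)(a+c)(a+d)}{(a+b+c+d)(1+a^2t)}\,t\,G(a+1)$, which is precisely $(\ast)$ for $G$. For uniqueness, set $D=F-G$; it satisfies the homogeneous relation $(1+a^2t)D(a)=\tfrac{(a+b)(a+c)(a+d)}{a+b+c+d}\,t\,D(a+1)$. If $D\not\equiv 0$, let $k$ be minimal with the $t^k$-coefficient of $D(a)$ not identically zero in $a$; comparing $t^k$-coefficients, the right side contributes $0$ (its extra factor of $t$ forces the relevant coefficient of $D(a+1)$ to be one of order $<k$), while the left side contributes that nonzero coefficient, a contradiction. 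Hence $F=G$.

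The second equality is a purely algebraic rewriting, and I expect it to be routine. Since $(1)_n=n!$, the ${}_4F_3$ equals $\sum_{n\geq 0}\tfrac{(a+b)_n(a+c)_n(a+d)_n}{(a+b+c+d)_n\,(a+1+i/\sqrt{t})_n\,(a+1-i/\sqrt{t})_n}$, so it suffices to match $n$-th summands after dividing by $1+a^2t$. The key identity is
\[
(a+1+i/\sqrt{t})_n\,(a+1-i/\sqrt{t})_n=\prod_{l=1}^{n}\bigl((a+l)^2+1/t\bigr)=t^{-n}\prod_{l=1}^{n}\bigl(1+(a+l)^2t\bigr),
\]
obtained by pairing conjugate factors and using $(i/\sqrt{t})^2=-1/t$. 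Dividing by the remaining factor $1+a^2t$ reconstitutes $\prod_{l=0}^{n}(1+(a+l)^2t)$ in the denominator and supplies $t^n$ in the numerator, matching $G(a)$ term by term.

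The main obstacle is the reindexing step that exhibits $G$ as a fixed point of $(\ast)$: one must track correctly how all three numerator Pochhammer symbols, the denominator Pochhammer symbol, and the product $\prod_l(1+(a+l)^2t)$ transform under $a\mapsto a+1$ and confirm that they reassemble into $\tfrac{(a+b)(a+c)(a+d)}{a+b+c+d}\,t\,G(a+1)$. The derivation of $(\ast)$, the uniqueness argument, and the final hypergeometric rewriting are all mechanical by comparison.
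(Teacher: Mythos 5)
Your proof is correct and takes essentially the same route as the paper: both reduce the statement to the functional equation $(1+a^2t)F(a)=1+\frac{(a+b)(a+c)(a+d)}{a+b+c+d}\,t\,F(a+1)$ coming from \eqref{eq:moment}, the paper then obtaining the sum by iterating this equation while you equivalently verify that the sum satisfies it and add the (routine) uniqueness argument that the iteration implicitly relies on. The reindexing and the $\,{}_4F_3\,$ rewriting you outline both check out.
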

\begin{proof} Set $F(a,t)=\sum_{n\geq 0}w_n(a)t^n$. The recurrence
(\ref{eq:moment}) implies that
\begin{equation}
F(a,t)= 1+\frac{(a+b)(a+c)(a+d)}{(a+b+c+d)}tF(a+1,t)-a^2tF(a,t).
\end{equation}
Hence
$$
F(a,t)={1\over 1+a^2t}+\frac{(a+b)(a+c)(a+d)t}{(a+b+c+d)(1+a^2t)}F(a+1,t).
$$
The formula follows then by iterating the above functional equation.
\end{proof}
Here is the formal  approach to the Wilson polynomials. 
We define a linear functional $\L$  on even polynomials by $\L(x^{2n})=w_n(a)$. 
\begin{lem}\label{lem1}
For $k\geq 0$,
\begin{equation}
\L(x^{2n}(a+ix)_k\, (a-ix)_k)=
\frac{(a+b)_k\,(a+c)_k\,(a+d)_k}{(a+b+c+d)_k}\,w_n(a+k).
\end{equation}
\end{lem}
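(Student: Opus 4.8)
The plan is to prove the identity by induction on $k$, using the factorization
\[
(a+ix)_k\,(a-ix)_k=\prod_{l=0}^{k-1}\bigl((a+l)^2+x^2\bigr),
\]
which lets me peel off one factor at a time and feed the result into the moment recurrence \eqref{eq:moment}. For $k=0$ the product on the left is empty, so the claim reduces to $\L(x^{2n})=w_n(a)$, which is the definition of $\L$; this is the base case.

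For the inductive step, I would write $(a+ix)_{k+1}(a-ix)_{k+1}=(a+ix)_k(a-ix)_k\cdot\bigl((a+k)^2+x^2\bigr)$ and use linearity of $\L$ to split
\[
\L\bigl(x^{2n}(a+ix)_{k+1}(a-ix)_{k+1}\bigr)=(a+k)^2\,\L\bigl(x^{2n}(a+ix)_k(a-ix)_k\bigr)+\L\bigl(x^{2n+2}(a+ix)_k(a-ix)_k\bigr).
\]
Both terms on the right are covered by the induction hypothesis (the second with $n$ replaced by $n+1$), so together they equal $\frac{(a+b)_k(a+c)_k(a+d)_k}{(a+b+c+d)_k}$ times $(a+k)^2 w_n(a+k)+w_{n+1}(a+k)$.

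The key step is then to recognize that applying \eqref{eq:moment} with $a$ replaced by $a+k$ gives exactly
\[
w_{n+1}(a+k)=\frac{(a+k+b)(a+k+c)(a+k+d)}{a+k+b+c+d}\,w_n(a+k+1)-(a+k)^2 w_n(a+k),
\]
so that the bracketed combination collapses to $\frac{(a+k+b)(a+k+c)(a+k+d)}{a+k+b+c+d}\,w_n(a+k+1)$. It remains to absorb this extra rational factor into the Pochhammer symbols via $(a+b)_k(a+b+k)=(a+b)_{k+1}$ (and likewise for $a+c$, $a+d$, and $a+b+c+d$), which yields precisely the claimed formula at level $k+1$.

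The only real obstacle is arranging the computation so that the combination $(a+k)^2 w_n(a+k)+w_{n+1}(a+k)$ appears; once one observes that this is exactly the right-hand side of the shifted recurrence \eqref{eq:moment} rearranged, everything else is routine bookkeeping of shifted Pochhammer symbols. No analytic input is needed---the argument is entirely formal, relying only on the recurrence and the linearity of $\L$.
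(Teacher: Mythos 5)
Your proof is correct and follows essentially the same route as the paper: induction on $k$, peeling off the factor $(a+k)^2+x^2$, applying the induction hypothesis to both resulting terms, and invoking the recurrence \eqref{eq:moment} with $a$ shifted to $a+k$ to collapse the bracketed combination. The paper's version is just a more condensed writeup of the identical argument.
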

\begin{proof}
We prove this by induction on $k$. It is clear for $k=0$. 
For  $k\geq 0$, we have 
\begin{align*}
\L(x^{2n}(a+ix)_{k+1}\, &(a-ix)_{k+1})\\
&=\L\left(x^{2n}(a+ix)_{k}\, (a-ix)_k ((a+k)^2+x^2)\right)
 \\
&=\frac{(a+b)_k\,(a+c)_k\,(a+d)_k}{(a+b+c+d)_k}
\left((a+k)^2\,w_n(a+k) +w_{n+1}(a+k)\right).
\end{align*}
By (\ref{eq:moment}) the formula  is valid for $k+1$.
\end{proof}

The Wilson polynomials  have 
the  orthogonality relation \cite[(9.1.2)]{KLS}
\begin{multline}\label{orth-wilson}
\L(W_n(x^2)W_m(x^2))\\
=n!
\frac{(a+b)_n(a+c)_n(a+d)_n(b+c)_n(b+d)_n(c+d)_n}{(a+b+c+d+n-1)_n
(a+b+c+d)_{2n}}\delta_{mn}\qquad (m, n\geq 0).
\end{multline}
We can verify the orthogonality directly using 
Lemma~\ref{lem1}. Indeed, by induction on $n\geq 0$,  
\begin{multline}\label{mnMoments}
\qquad
\L\left((a+ix)_m\,(a-ix)_m\, (b+ix)_n\, (b-ix)_n\right)\\
 =
\frac{(a+b)_{m+n}(a+c)_m(a+d)_m(b+c)_n(b+d)_n}{(a+b+c+d)_{m+n}}.
\qquad
\end{multline}
We  evaluate
\begin{align*}
\L\kern15pt&\kern-15pt\left(\tilde{W}_n(x^2;a,b,c,d)\, (b+ix)_m\, (b-ix)_m\right)\\
&=\frac{(a+b)_m(b+c)_m(b+d)_m}{(a+b+c+d)_m}{}_3F_2\left(\begin{array}{c}
-n, \,n+a+b+c+d-1, \,a+b+m\cr
a+b,\,a+b+c+d+m
\end{array};1\right)\\
&=\frac{(a+b)_m(b+c)_m(b+d)_m}{(a+b+c+d)_m}\frac{(1-n-c-d)_n(-m)_n}{(a+b)_n(
1-n-a-b-c-d-m)_n}\\ 
&\qquad \hbox{(by the Pfaff-Saalsch\"utz theorem)}\\
&=\frac{(a+b)_m(b+c)_m(b+d)_m(c+d)_n(-m)_n}{(a+b+c+d)_{m+n}(a+b)_n}.
\end{align*}
This is 0 for $m<n$, which implies orthogonality.

The orthogonality \eqref{orth-wilson}  implies 
 the three-term recurrence
relation~\cite[(9.1.5)]{KLS}
\begin{equation}\label{recurrence-favard}
xW_n(x)=W_{n+1}(x)+b_n W_n(x)+\lambda_n W_{n-1}(x)
\end{equation}
with $
\lambda_n=A_{n-1}C_n,\quad b_n=A_n+C_n-a^2$,
where
\begin{equation*}
\left\lbrace\begin{aligned}
A_n&=\frac{\displaystyle(n+a+b+c+d-1)(n+a+b)(n+a+c)(n+a+d)}
{\displaystyle(2n+a+b+c+d-1)(2n+a+b+c+d)},\\[5pt]
C_n&=\frac{\displaystyle n(n+b+c-1)(n+b+d-1)(n+c+d-1)}
{\displaystyle(2n+a+b+c+d-2)(2n+a+b+c+d-1)}.       
\end{aligned}\right.
\end{equation*}
Indeed, from \eqref{orth-wilson} we derive that 
$$
\lambda_n= \frac{\L(x^2W_{n-1}(x^2)W_{n}(x^2))}
{\L(W_{n-1}(x^2)^2)}=\frac{\L(W_{n}(x^2)^2)}
{\L(W_{n-1}(x^2)^2)}=A_{n-1}C_n.
$$
Extracting the coefficient of $x^n$ in  \eqref{recurrence-favard}  we have
\begin{equation}\label{coeff:bn}
b_n=[x^{n-1}]W_n(x)-[x^n]W_{n+1}(x)
\end{equation}
where $[x^k]W_n(x)$ is the coefficient of $x^k$ in $W_n(x)$.
As $$(a+i\sqrt{x})_k(a-i\sqrt{x})_k=\prod_{l=0}^{k-1}((a+l)^2+x),$$
 we derive  from \eqref{wilson} that 
\[
[x^{n-1}]W_n(x)
=-\frac{n(a+b+n-1)(a+c+n-1)(a+d+n-1)}{a+b+c+d+2n-2}	
  +\sum_{l=0}^{n-1}(a+l)^2, 
\]
which yields $b_n=A_n+C_n-a^2$ by \eqref{coeff:bn}.

It is known \cite{Ch} that the recurrence 
\eqref{recurrence-favard}  is equivalent to the following J-fraction expansion of  the  moments
$w_n(a)$, where the J-fraction 
$J(t;a_1,b_1,a_2,b_2,\dots) $ is defined to be 
\begin{equation*}
\cfrac{1}{1-a_1 t-
\cfrac{b_1 t^2}{1-a_2 t -
\cfrac{b_2 t^2}{1-\cdots
}}}.
\end{equation*}

\begin{prop} 
We have 
expansion
\begin{align}\label{W-JCF}
\smash[b]{\sum_{n=0}^\infty} w_n(a)t^n&= J(t;\, A_0+C_0-a^2, A_0C_1,\,\ldots,\nonumber\\ 
& \qquad\qquad A_n+C_n-a^2,\, A_{n}C_{n+1}, \ldots).
\end{align}
\end{prop}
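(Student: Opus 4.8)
The plan is to read the continued fraction off directly from the three-term recurrence \eqref{recurrence-favard}, using the general correspondence---recalled just above and attributed to \cite{Ch}---between the monic three-term recurrence of an orthogonal polynomial sequence and the J-fraction expansion of its moment generating function. The essential point is that the substantive work has already been done: the recurrence coefficients $\lambda_n = A_{n-1}C_n$ and $b_n = A_n + C_n - a^2$ were determined explicitly above, the former from the orthogonality relation \eqref{orth-wilson} via $\lambda_n = \L(W_n(x^2)^2)/\L(W_{n-1}(x^2)^2)$, the latter from the coefficient extraction \eqref{coeff:bn}. So I expect the proposition to follow by bookkeeping rather than by any new analysis.

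First I would fix the dictionary between the two sets of data. The moments in question are $w_n(a) = \L(x^{2n})$, that is, the moments of the orthogonal polynomials $W_n$ regarded as monic polynomials in the single variable $y = x^2$; in this variable \eqref{recurrence-favard} is exactly the monic recurrence $y W_n = W_{n+1} + b_n W_n + \lambda_n W_{n-1}$ with $W_{-1} = 0$, $W_0 = 1$, and $\L(1) = w_0(a) = 1$. The general theorem then gives $\sum_{n\ge 0} w_n(a)\,t^n = J(t; a_1, b_1, a_2, b_2, \dots)$, where the $a$-parameters receive the successive diagonal coefficients, $a_{n+1} = b_n$, and the $b$-parameters receive the successive subdiagonal coefficients, $b_{n+1} = \lambda_{n+1}$.

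Next I would substitute the explicit values. Shifting the index in $\lambda_n = A_{n-1}C_n$ gives $\lambda_{n+1} = A_n C_{n+1}$, so $b_{n+1} = A_n C_{n+1}$; and $a_{n+1} = b_n = A_n + C_n - a^2$. Feeding these into $J(t; a_1, b_1, a_2, b_2, \dots)$ reproduces term for term the right-hand side of \eqref{W-JCF}, which finishes the proof.

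The only place that demands care---and where I expect the sole genuine risk of error---is the indexing convention of $J(t; a_1, b_1, a_2, b_2, \dots)$: one must verify that the $a$-list begins with the $n = 0$ diagonal coefficient $b_0 = A_0 + C_0 - a^2$ and the $b$-list with $\lambda_1 = A_0 C_1$, so that the phantom term $\lambda_0 W_{-1}$ never intervenes, and that the normalization $w_0(a) = 1$ matches the leading numerator $1$ of the continued fraction. Should one prefer not to invoke \cite{Ch} as a black box, the equivalence itself can be proved in a self-contained way by setting $D_n(t) = t^n W_n(1/t)$, checking that these reversed polynomials satisfy the denominator recurrence of the convergents of $J(t; \dots)$, and showing via orthogonality that $\sum_{n\ge 0} w_n(a)\,t^n$ agrees with the $n$-th convergent modulo $t^{2n}$; but given that the correspondence is already cited, the substitution above is all that is required.
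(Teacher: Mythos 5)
Your proposal is correct and matches the paper's treatment: the paper gives no separate proof of this proposition, deriving it exactly as you do from the three-term recurrence \eqref{recurrence-favard} with the already-computed coefficients $b_n=A_n+C_n-a^2$ and $\lambda_n=A_{n-1}C_n$, via the recurrence--J-fraction equivalence cited from \cite{Ch}. Your attention to the indexing convention and the normalization $w_0(a)=1$ is the right (and only) point requiring care.
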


Recall  the  following  contraction formulae \cite{Vi83} transforming S-fraction to  J-fraction:
\begin{subequations}
\begin{align}
S(t; \alpha_1, \ldots, \alpha_n,\ldots)
&=J(t; \gamma_0, \beta_1, \gamma_0, \beta_1, \ldots, 
\gamma_n, \beta_{n+1},\ldots)\label{contraction1}\\
&=1+\gamma_0 t\, 
J(t;\gamma_0', \beta_1', \ldots, \gamma_n', \beta'_n, \ldots )
\label{contraction2}
\end{align}
with $\gamma_0=\alpha_1$,$\gamma'_0=\alpha_1+\alpha_2$ and for $n\geq 1$
\begin{align*}
 \gamma_n&=\alpha_{2n}+\alpha_{2n+1},\qquad 
\beta_n=\alpha_{2n-1}\alpha_{2n};\\
\gamma'_n&=\alpha_{2n-1}+\alpha_{2n},\qquad 
\beta'_n=\alpha_{2n}\alpha_{2n+1}.
\end{align*}
\end{subequations}

Thus, when $a=0$ we can transform \eqref{W-JCF} to 
 the  S-fraction.
 \begin{cor} We have 
\begin{multline}\label{w:a=0}
\sum_{n=0}^\infty w_n(0)t^n= S\left(t;\, \frac{bcd}{b+c+d},\,
\frac{(b+c)(c+d)(b+d)}{(b+c+d)(b+c+d+1)},\,\cdots,\,\right.\\
 \frac{(b+n)(c+n)(d+n)(b+c+d+n-1)}{(b+c+d+2n-1)(b+c+d+2n)},\\
\left.\frac{(n+1)(b+c+n)(c+d+n)(b+d+n)}{(b+c+d+2n)(b+c+d+2n+1)},
\ldots\right).\qquad
\end{multline}
\end{cor}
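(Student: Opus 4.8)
The plan is to obtain the corollary directly from the J-fraction expansion \eqref{W-JCF} by setting $a=0$ and then inverting the contraction formula \eqref{contraction1}. First I would put $a=0$ in \eqref{W-JCF}: the $-a^2$ term disappears from every linear coefficient, and since $C_n$ carries a factor $n$ we have $C_0=0$, so the expansion becomes
$$\sum_{n=0}^\infty w_n(0)t^n = J(t;\, A_0,\, A_0C_1,\, A_1+C_1,\, A_1C_2,\,\ldots),$$
where from now on $A_n$ and $C_n$ are understood to be evaluated at $a=0$. In the notation $J(t;a_1,b_1,a_2,b_2,\dots)$ this says $a_{n+1}=A_n+C_n$ and $b_{n+1}=A_nC_{n+1}$ for $n\ge0$.

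Next I would recognize this as the contraction of an S-fraction. Comparing with the relations $\gamma_0=\alpha_1$, $\gamma_n=\alpha_{2n}+\alpha_{2n+1}$, $\beta_n=\alpha_{2n-1}\alpha_{2n}$ listed below \eqref{contraction2}, I need coefficients $\alpha_1,\alpha_2,\dots$ with $\alpha_1=A_0$, $\alpha_{2n}+\alpha_{2n+1}=A_n+C_n$, and $\alpha_{2n-1}\alpha_{2n}=A_{n-1}C_n$. Because the quadratic coefficient already appears factored as $A_{n-1}\cdot C_n$, the choice
$$\alpha_{2n-1}=A_{n-1},\qquad \alpha_{2n}=C_n\qquad(n\ge1)$$
satisfies all three relations simultaneously. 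Hence \eqref{contraction1} yields
$$\sum_{n=0}^\infty w_n(0)t^n = S(t;\, A_0,\, C_1,\, A_1,\, C_2,\, A_2,\, C_3,\,\ldots).$$

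Finally I would substitute $a=0$ into the explicit expressions for $A_n$ and $C_n$ stated before \eqref{recurrence-favard}. This is a routine simplification: at $a=0$ the factor $(n+a+b+c+d-1)$ in the numerator of $A_0$ cancels the factor $(2n+a+b+c+d-1)$ in the denominator at $n=0$, giving $A_0=bcd/(b+c+d)$, and in general the two families $A_n$ and $C_{n+1}$ produce exactly the consecutive pairs of S-fraction coefficients displayed in \eqref{w:a=0}.

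I expect no real obstacle here; the only point deserving a word is that the splitting $\beta_n=A_{n-1}C_n$ used to recover the $\alpha_i$ is the natural one, and one should check afterward that the resulting $\alpha_{2n}+\alpha_{2n+1}$ indeed reproduces $\gamma_n=A_n+C_n$. Since $A_{n-1}$ and $C_n$ are already isolated blocks of the product, this verification is immediate.
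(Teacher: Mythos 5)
Your proposal is correct and is exactly the derivation the paper intends (the paper only remarks that \eqref{W-JCF} with $a=0$ can be transformed to an S-fraction via \eqref{contraction1} and leaves the details implicit): you set $a=0$, observe $C_0=0$, split $\gamma_n=A_n+C_n$ and $\beta_n=A_{n-1}C_n$ as $\alpha_{2n-1}=A_{n-1}$, $\alpha_{2n}=C_n$, and substitute. The only detail worth stating explicitly is the direction of the logic—you exhibit one choice of $\alpha_i$ whose contraction reproduces the given J-fraction, which suffices without appealing to uniqueness of the decontraction—and you have handled that correctly.
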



\section{Exponential generating functions}

To derive exponential generating functions from ordinary generating functions we use the following lemma, which we will also apply to other orthgonal polynomials.

Let $\epsi: \Q[t]\longrightarrow \Q[t]$ be the linear
transformation defined by
\begin{equation}\label{eq:epsilon}
\epsi\biggl(\sum_{n=0}^\infty u_nt^n\biggr)=\sum_{n=0}^\infty u_n\frac{t^n}{n!}.
\end{equation}
\begin{lem} 
\label{l-egf}
For any nonnegative integers $m$ and $n$ we have
$$
\epsi\left(\frac{t^m}{(1-\alpha t)(1-(\alpha+1)t)\cdots (1-(\alpha+m)t)}\right)=e^{\alpha t}\frac{(e^t-1)^m}{m!}. $$
\end{lem}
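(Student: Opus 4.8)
The plan is to prove the identity by induction on $m$, carrying the parameter $\alpha$ along. Write $R_m(\alpha):=t^m\big/\prod_{l=0}^m(1-(\alpha+l)t)$ for the rational function on the left-hand side, so that the assertion is $\epsi\bigl(R_m(\alpha)\bigr)=e^{\alpha t}(e^t-1)^m/m!$. The base case $m=0$ is immediate: $R_0(\alpha)=1/(1-\alpha t)=\sum_{n\ge0}\alpha^nt^n$, so by the definition \eqref{eq:epsilon} of $\epsi$ we get $\epsi(R_0(\alpha))=\sum_{n\ge0}\alpha^nt^n/n!=e^{\alpha t}$, which is exactly the claimed formula at $m=0$.

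The crux of the argument is a recursion that lowers $m$ by one at the cost of shifting $\alpha$. First I would combine the two extreme factors of the denominator using the elementary partial-fraction identity
\[
\frac{1}{(1-\alpha t)(1-(\alpha+m)t)}=\frac{1}{mt}\left(\frac{1}{1-(\alpha+m)t}-\frac{1}{1-\alpha t}\right).
\]
Substituting this into $R_m(\alpha)$, absorbing each of the two resulting simple fractions into the remaining product $\prod_{l=1}^{m-1}(1-(\alpha+l)t)^{-1}$, and cancelling one power of $t$, the two products reassemble into $R_{m-1}(\alpha+1)$ and $R_{m-1}(\alpha)$ respectively. This yields the recursion
\[
R_m(\alpha)=\frac1m\bigl(R_{m-1}(\alpha+1)-R_{m-1}(\alpha)\bigr).
\]

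With the recursion in hand the conclusion is formal. Since $\epsi$ is $\Q$-linear and acts only on the powers of $t$ (it is entirely insensitive to the value of $\alpha$), applying it to the recursion and invoking the induction hypothesis for $R_{m-1}$ gives
\[
\epsi\bigl(R_m(\alpha)\bigr)=\frac1m\left(e^{(\alpha+1)t}\frac{(e^t-1)^{m-1}}{(m-1)!}-e^{\alpha t}\frac{(e^t-1)^{m-1}}{(m-1)!}\right).
\]
Factoring out $e^{\alpha t}(e^t-1)^{m-1}/(m-1)!$ and using $e^{(\alpha+1)t}-e^{\alpha t}=e^{\alpha t}(e^t-1)$ collapses this to $e^{\alpha t}(e^t-1)^m/m!$, completing the induction.

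I expect the only real obstacle to be establishing the recursion, i.e.\ checking that the telescoping of the two outermost factors really does reproduce $R_{m-1}$ with the parameter shifted by $1$; all the combinatorial content of the lemma is concentrated in that single algebraic identity, after which $\epsi$ does nothing more than turn geometric series into exponentials. An alternative route avoids the recursion entirely: decompose $R_m(\alpha)$ into partial fractions $\sum_{j=0}^m c_j/(1-(\alpha+j)t)$, where a short residue computation gives $c_j=(-1)^{m-j}\binom{m}{j}(\alpha+j)^m/m!$; reading off the coefficient of $t^N$ then expresses it as the finite difference $\tfrac1{m!}\sum_{j}(-1)^{m-j}\binom{m}{j}(\alpha+j)^N$, and applying $\epsi$ termwise turns each $1/(1-(\alpha+j)t)$ into $e^{(\alpha+j)t}$, after which the binomial theorem gives $\tfrac{e^{\alpha t}}{m!}\sum_j\binom{m}{j}(-1)^{m-j}(e^t)^j=e^{\alpha t}(e^t-1)^m/m!$. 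In that version the main obstacle is instead recognizing the finite-difference/binomial structure of the coefficients.
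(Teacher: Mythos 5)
Your main argument is correct, and it is organized differently from the paper. The paper's first proof writes down the full partial fraction expansion
\[
\frac{t^m}{\prod_{k=0}^m\bigl(1-(\alpha+k)t\bigr)}=\frac{1}{m!}\sum_{k=0}^m\binom{m}{k}\frac{(-1)^k}{1-(\alpha+m-k)t}
\]
in one shot and then applies $\epsi$ and the binomial theorem; its second proof runs in the opposite direction, expanding $e^{\alpha t}(e^t-1)^m/m!$ and arguing that its preimage under $\epsi$ must be a proper rational function with denominator $\prod_{k=0}^m(1-(\alpha+k)t)$ and numerator $t^m$. Your induction instead peels off only the two extreme factors at each step, producing the finite-difference recursion $R_m(\alpha)=\tfrac1m\bigl(R_{m-1}(\alpha+1)-R_{m-1}(\alpha)\bigr)$; this is in effect a telescoped version of the paper's first proof, trading the explicit $m+1$-term expansion for an $m$-step induction, and it cleanly mirrors the identity $e^{(\alpha+1)t}-e^{\alpha t}=e^{\alpha t}(e^t-1)$ on the exponential side. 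All steps of this route check out, including the base case and the linearity of $\epsi$. Your sketched alternative route is essentially the paper's first proof, but note a slip there: the residue computation gives $c_j=(-1)^{m-j}\binom{m}{j}/m!$ with no factor of $(\alpha+j)^m$ (the $(\alpha+j)^{-m}$ coming from $t^m$ at $t=1/(\alpha+j)$ cancels the $(\alpha+j)^m$ arising from the denominator product); the finite-difference expression and the binomial-theorem conclusion you then state are the ones that follow from the corrected coefficient.
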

\begin{remark}
We shall give two proofs. The first one uses a partial fraction expansion, while the second one  doesn't require explicitly doing the partial fraction expansion.
\end{remark}
\begin{proof}[First proof]
Expanding the left side by partial fractions
 we get
$$
\frac{t^m}{\prod_{k=0}^m(1-(\alpha+k) t)}=
\frac{1}{m!}\sum_{k=0}^m\binom{m}{k}
\frac{(-1)^k}{1-(\alpha+m-k)\,t}.
$$
The result follows  from the fact that
$\epsi(\frac{1}{1-\beta \,t})=e^{\beta\,t}$ and the binomial theorem.
\end{proof}

%
\begin{proof}[Second proof]
Expanding $e^{\alpha t}{(e^t-1)^m}/{m!}$ by the binomial theorem we obtain a linear combination of terms
$e^{(\alpha+k)t}$, $k=0,\dots, m$, and $\epsilon^{-1}(e^{(\alpha+k)t})= 1/(1-(\alpha+k)t)$. So $\epsilon^{-1}\bigl(e^{\alpha t}{(e^t-1)^m}/{m!}\bigr)$ must be a proper rational function
of the form 
\begin{equation*}
\frac{N(t)}{(1-\alpha t)(1-(\alpha+1)t)\cdots (1-(\alpha+m)t)},
\end{equation*}
where the degree of $N(t)$ is at most $m$. But since the first nonzero coefficient of $e^{\alpha t}(e^t-1)^m/m!$ is $t^m/m!$, $N(t)$ must be $t^m$.
\end{proof}

\begin{remark}
We may also reduce the first proof to  $\alpha=0$ and apply the  formula
\begin{equation}\label{alpha-lem}
\epsilon^{-1}\left(
e^{\alpha t}\sum_{n=0}^\infty u_n\frac{t^n}{n!}\right)=\frac{1}{1-\alpha t}\sum_{n=0}^\infty u_n\left(\frac{t}{1-\alpha t}\right)^n.
\end{equation}

\end{remark}
\begin{subequations}
If $\alpha=-m/2$ then we have
\begin{equation}
\epsi
\left(\frac{t^m}{(1+\frac{m}{2} t)\cdots (1-\frac{m}{2}t)}\right)
=e^{-\frac{m}{2} t}\frac{(e^t-1)^m}{m!}=\frac{(2\sinh\frac{t}{2})^m}{m!}.
\end{equation}
If $m=2n$, this is
\begin{equation}
\label{m=2n}
\epsi\left(\frac{t^{2n}}{(1- t^2)\,(1-2^2\, t^2)\cdots (1-n^2\, t^2)}\right)=\frac{(2\sinh\frac{t}{2})^{2n}}{(2n)!}, \end{equation}
and if $m=2n+1$ this is
\begin{equation}
\label{m=2n+1}
\epsi\left(\frac{t^{2n+1}}{(1- (\frac{1}{2})^2\,t^2)\,(1-(\frac{3}{2})^2\, t^2)\cdots (1-(n+\frac{1}{2})^2\, t^2)}\right) =\frac{(2\sinh\frac{t}{2})^{2n+1}}{(2n+1)!}.
\end{equation}
\end{subequations}
If $a=0$ or $a={1\over 2}$ then there is a
nice exponential generating function for the moments of the Wilson polynomials.

\begin{thm} We have 
\begin{equation}\label{gfwzero}
\sum_{n=0}^\infty w_n(0)\frac{t^{2n}}{(2n)!}={}_3F_2\left(\begin{array}{c}
b, \,c, \,d\cr
b+c+d,\,\frac{1}{2}
\end{array};\sin^2\frac{t}{2}\right),
\end{equation}
\begin{equation}\label{gfwhalf}
\sum_{n=0}^\infty w_n(1/2)\frac{t^{2n+1}}{(2n+1)!}
=2\sin\frac{t}{2}\:{}_3F_2\left(
\ab{b+\half, \,c+\half, \,d+\half}
{b+c+d+\half,\,\frac{3}{2}} \,
;\sin^2\frac{t}{2}\right).
\end{equation}
\end{thm}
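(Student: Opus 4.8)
The plan is to obtain both exponential generating functions by applying the map $\epsi$ of \eqref{eq:epsilon} to the ordinary generating function for $w_n(a)$ furnished by the first Proposition, specialized to $a=0$ and $a=\tfrac12$. The point is that the targets $\sum_n w_n(0)\,t^{2n}/(2n)!$ and $\sum_n w_n(1/2)\,t^{2n+1}/(2n+1)!$ are exactly what one gets from the ordinary series $\sum_n w_n(a)s^n$ by the replacement $s^N\mapsto t^{2N}/(2N)!$ (resp.\ $s^N\mapsto t^{2N+1}/(2N+1)!$); equivalently, one substitutes $s=t^2$ (resp.\ substitutes $s=t^2$ and multiplies by $t$) and then applies $\epsi$ in the variable $t$. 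First I would record the specialized summands: for $a=0$ the denominator $\prod_{l=0}^N(1+(a+l)^2s)$ becomes $\prod_{k=1}^N(1+k^2s)$ (the $l=0$ factor is $1$), while for $a=\tfrac12$ it becomes $\prod_{k=0}^N\bigl(1+(k+\tfrac12)^2 s\bigr)$. These are precisely the denominators appearing in \eqref{m=2n} and \eqref{m=2n+1}.

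The key technical step is to produce the ``$\sin$'' analogues of \eqref{m=2n} and \eqref{m=2n+1}, since the denominators above carry plus signs whereas those identities carry minus signs and yield $\sinh$. I would obtain them by substituting $t\mapsto it$ in \eqref{m=2n} and \eqref{m=2n+1}: since $\epsi$ commutes with coefficientwise rescaling of the variable (i.e.\ $\epsi(f(it))=(\epsi f)(it)$) and $\sinh(it/2)=i\sin(t/2)$, the powers of $i$ cancel on the two sides and one is left with
\[
\epsi\!\left(\frac{t^{2n}}{\prod_{k=1}^n(1+k^2t^2)}\right)=\frac{(2\sin\frac{t}{2})^{2n}}{(2n)!},\qquad
\epsi\!\left(\frac{t^{2n+1}}{\prod_{k=0}^n(1+(k+\frac12)^2t^2)}\right)=\frac{(2\sin\frac{t}{2})^{2n+1}}{(2n+1)!}.
\]
Applying the appropriate one of these to each summand (term by term; this is legitimate because the $N$-th summand has $s$-order at least $N$, so each power of $t$ receives only finitely many contributions) turns $\sum_n w_n(0)t^{2n}/(2n)!$ into $\sum_N \frac{(b)_N(c)_N(d)_N}{(b+c+d)_N}\,\frac{(2\sin\frac t2)^{2N}}{(2N)!}$, and similarly in the half-integer case with $(2\sin\frac t2)^{2N+1}/(2N+1)!$ and all parameters shifted by $\tfrac12$.

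Finally I would collapse these into the stated ${}_3F_2$'s. Writing $u=\sin^2\frac t2$, one has $(2\sin\frac t2)^{2N}=4^Nu^N$ and $(2\sin\frac t2)^{2N+1}=2\sin\frac t2\cdot4^Nu^N$, and the duplication identities $(2N)!=4^N N!\,(\tfrac12)_N$ and $(2N+1)!=4^N N!\,(\tfrac32)_N$ give $4^N/(2N)!=1/\bigl(N!\,(\tfrac12)_N\bigr)$ and $4^N/(2N+1)!=1/\bigl(N!\,(\tfrac32)_N\bigr)$. Substituting produces exactly \eqref{gfwzero} and \eqref{gfwhalf}, the factor $2\sin\frac t2$ pulling out in front in the second. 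I expect the main obstacle to be conceptual rather than computational: correctly identifying the transformation that sends the moment generating function to the desired even/odd exponential generating function as $\epsi$ composed with $s\mapsto t^2$, and carrying out the $t\mapsto it$ passage so that the sign in the denominators and the $\sinh\to\sin$ conversion are handled consistently. Once the two $\sin$-analogue identities are in hand, the remainder is routine Pochhammer bookkeeping.
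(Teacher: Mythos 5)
Your proposal is correct and follows essentially the same route as the paper's proof: both start from the ordinary generating function of the first Proposition specialized at $a=0$ and $a=\tfrac12$, apply $\epsi$ term by term via \eqref{m=2n} and \eqref{m=2n+1}, and use the substitution $t\mapsto it$ to convert $\sinh$ to $\sin$. The only (cosmetic) difference is that the paper works with $-t^2$ throughout and replaces $t$ by $it$ at the very end, whereas you perform that substitution on the two lemma identities first; the bookkeeping is identical.
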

\begin{proof}
Returning to the moments of the Wilson polynomials, 
for $a=0$ we have
$$
\sum_{n=0}^\infty (-1)^nw_n(0)\, t^{2n}=\sum_{n=0}^\infty (-1)^n
\frac{(b)_n(c)_n(d)_n\, t^{2n}}{(b+c+d)_n\prod_{l=0}^n(1-l^2t^2)}.
$$
So by \eqref{m=2n},
\begin{align*}
\sum_{n=0}^\infty (-1)^nw_n(0)\, \frac{t^{2n}}{(2n)!}
&=\sum_{n=0}^\infty (-1)^n
\frac{(b)_n(c)_n(d)_n}{(b+c+d)_n}\frac{(2\sinh \frac{t}{2})^{2n}}{(2n)!}\\
&= {}_3F_2\left(\begin{array}{c}
b, \,c, \,d\cr
b+c+d,\,\frac{1}{2}
\end{array};-\sinh^2\frac{t}{2}\right).
\end{align*}
Replacing $t$ with $it$, we get \eqref{gfwzero}.

Similarly, for $a=1/2$ we have
$$
\sum_{n=0}^\infty w_n(1/2)t^n=\sum_{n=0}^\infty \frac{(b+1/2)_n(c+1/2)_n(d+1/2)_n(-1)^nt^{2n}}{(b+c+d+1/2)_n\prod_{l=0}^n(1-
(l+1/2)^2t^{2n})}. $$
Hence
$$
\sum_{n=0}^\infty (-1)^nw_n(1/2)t^{2n+1}=
\sum_{n=0}^\infty (-1)^n\frac{(b+1/2)_n(c+1/2)_n(d+1/2)_nt^{2n+1}}
{(b+c+d+1/2)_n\prod_{l=0}^n(1-(l+1/2)^2t^{2n})}.
$$
Therefore by \eqref{m=2n+1},
$$
\sum_{n=0}^\infty w_n(1/2)\frac{(-1)^nt^{2n+1}}{(2n+1)!}
=\sum_{n=0}^\infty (-1)^n\frac{(b+1/2)_n(c+1/2)_n(d+1/2)_n}
{(b+c+d+1/2)_n}\frac{(2\sinh \frac{t}{2})^{2n+1}}{(2n+1)!}.
$$
Replacing $t$ with $it$, we may write this as \eqref{gfwhalf}.
\end{proof}

We can also get the exponential generating functions
\eqref{gfwzero} and \eqref{gfwhalf} as follows.
We have
$$
\cosh\left(2x\arcsin\frac{z}{2}\right)=\sum_{n=0}^\infty x^2(x^2+1^2)\cdots (x^2+(n-1)^2)\frac{z^{2n}}{(2n)!}. $$
So, by Lemma~\ref{lem1} with $a=0$, we have
$$
\L\left(\cosh\left(2x\arcsin\frac{z}{2}\right)\right)
=\sum_{n=0}^\infty\frac{(b)_n(c)_n(d)_n}{(b+c+d)_n}\frac{z^{2n}}{(2n)!}.
$$
Setting $z=2\sin\frac{t}{2}$, we get
\begin{align}
\L\left(\cosh(xt)\right)=& \sum_{n=0}^\infty \L(x^{2n})
\frac{t^{2n}}{(2n)!}\nonumber\\
=& {}_3F_2\left(\begin{array}{c}
b, \,c, \,d\cr
b+c+d,\,\frac{1}{2}
\end{array};\;\sin^2\frac{t}{2}\right).
\end{align}
Similarly,
\begin{multline*}
\quad\sinh\left(2x\arcsin\frac{z}{2}\right)\\
=x\sum_{n=0}^\infty
\left(x^2+\left(\tfrac{1}{2}\right)^2\right)\,\left(x^2+\left(\tfrac{3}{2}\right)^2\right)\cdots
\left(x^2+\left(n-\tfrac{1}{2}\right)^2\right)\frac{z^{2n+1}}{(2n+1)!}.\quad
 \end{multline*}
So, by Lemma~\ref{lem1} with $a=\frac{1}{2}$, we have
\begin{align}
\L\left(x^{-1}\sinh(xt)\right)=& \sum_{n=0}^\infty \L(x^{2n})
\frac{t^{2n+1}}{(2n+1)!}\nonumber\\
=& 2\sin\frac{t}{2}\,{}_3F_2\left(\begin{array}{c}
b+\half, \,c+\half, \,d+\half\cr
b+c+d,\,\frac{1}{2}
\end{array};\;\sin^2\frac{t}{2}\right).
\end{align}

\section{Moments of continuous dual Hahn polynomials}
If we take the limit as $d\to \infty$ in the Wilson polynomials we get the
\emph{continuous dual Hahn polynomials} $p_n(x):=p_n(x;a,b,c)$ defined by
\begin{equation}\label{hahn}
p_n(x^2)=(-1)^n(a+b)_n(a+c)_n\,{}_3F_2\left(\begin{array}{rl}
-n,a+ix,&a-ix\cr
a+b,&a+c
\end{array};1\right).
\end{equation}
The first two values of $p_n(x^2;a,b,c)$  are the following:
\begin{align*}
p_1(x^2)&= x^2-(ab+bc+ca)\\
p_2(x^2)&= x^4-[1+2(a+b+c)+2(ab+ac+bc)]x^2\\       
&+ {a}^{2}{b}^{2}+2\,{a}^{2}bc+{a}^{2}{c}^{2}+2\,{b}^{2}ac+2\,{c}^{2}ab+{
b}^{2}{c}^{2}\\
&+{a}^{2}b+{a}^{2}c+{b}^{2}a+4\,abc+a{c}^{2}+{b}^{2}c+b{c}
^{2}+ab+ac+bc.
\end{align*}
When  either $a$, $b$, and $c$ are all positive or one is positive and the other two are complex conjugates with positive real parts, 
Wilson's result \cite{Wi}  reduces  to
\begin{multline}
\qquad\frac{1}{2\pi}\int_0^\infty\left|\frac{\Gamma(a+ix)\Gamma(b+ix)\Gamma(c+ix)
} {\Gamma(2ix)}\right|^2p_m(x^2)p_n(x^2)\,dx\\
\hskip 1cm =\Gamma(n+a+b)\Gamma(n+a+c)\Gamma(n+b+c)n!\,\delta_{mn}.\qquad
\end{multline}
The corresponding moments  $\mu_n(a,b,c)$ are given by  
\begin{multline}\label{Hmoment}
\qquad\frac{1}{2\pi}\int_0^\infty\left|\frac{\Gamma(a+ix)\Gamma(b+ix)\Gamma(c+ix)
} {\Gamma(2ix)}\right|^2x^{2n}dx\\
 =\mu_{n}(a,b,c)\Gamma(a+b)\Gamma(a+c)\Gamma(b+c).\qquad
\end{multline}
De Branges \cite{DB} (\cite[p.~152]{AAR} and \cite{As1})
also proved that $\mu_0(a,b,c)=1$. The counterpart of \eqref{eq:moment} reads as follows:
\begin{equation}\label{df}
\mu_{n+1}(a,b,c)=(a+b)(a+c)\mu_n(a+1,b,c)-a^2\mu_n(a,b,c), 
\end{equation}
which is equivalent to the generating function  
\begin{align}\label{gf-F}
\sum_{n\geq 0}\mu_{n}(a,b,c)t^n&=\sum_{n\geq 0}\frac{(a+b)_{n}(a+c)_{n}t^n}{\prod_{l=0}^{n}(1+(a+l)^2t)}\nonumber\\
 &=\frac{1}{ 1+a^2t}\;
{}_3F_2\left(\begin{array}{c}
a+b, \,a+c, \,1\cr
a+1+{i/\sqrt{t}}, \,a+1-{i/\sqrt{t}}
\end{array};1\right).
\end{align}
We remark that the above recurrence is nothing but the definition
of a sequence of polynomials introduced by Dumont and Foata \cite{DF,Ca2,Ze}
as an extension of Genocchi numbers. In this context the following result was conjectured by Gandhi \cite{Gan} in 1970 and first
proved by Carlitz \cite{Ca1}, and Riordan and Stein \cite{RS}.
Here we provide a direct proof starting from (\ref{Hmoment}).

\begin{prop} For $n\ge0$ we have 
\begin{equation}\label{gandhi}
\mu_{n}(1,1,1)=G_{2n+4}.
\end{equation}
\end{prop}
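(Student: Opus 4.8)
The plan is to prove \eqref{gandhi} by showing that the ordinary generating function $\sum_{n\ge0}\mu_n(1,1,1)\,t^n$ coincides term by term with the J-fraction obtained by contracting the Genocchi S-fraction \eqref{genocchi}. Since $\L(x^{2n})=\mu_n(a,b,c)$ is the moment functional of an orthogonal polynomial sequence, the Stieltjes--Favard correspondence used for \eqref{W-JCF} (see \cite{Ch}) gives $\sum_{n\ge0}\mu_n(a,b,c)\,t^n=J(t;b_0,\lambda_1,b_1,\lambda_2,\dots)$, where $b_n$ and $\lambda_n$ are the coefficients of the three-term recurrence for the continuous dual Hahn polynomials. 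So the whole argument reduces to computing these coefficients at $a=b=c=1$ and comparing them with the contracted Genocchi fraction.

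First I would obtain the recurrence coefficients by letting $d\to\infty$ in the Wilson data of \eqref{recurrence-favard}: the limits are $A_n=(n+a+b)(n+a+c)$ and $C_n=n(n+b+c-1)$, and the monic continuous dual Hahn polynomials then satisfy $x^2p_n=p_{n+1}+b_np_n+\lambda_np_{n-1}$ with $b_n=A_n+C_n-a^2$ and $\lambda_n=A_{n-1}C_n$. Specializing to $a=b=c=1$ gives $A_n=(n+2)^2$ and $C_n=n(n+1)$, hence
\[
b_n=(n+2)^2+n(n+1)-1=(n+1)(2n+3),\qquad \lambda_n=(n+1)^2\,n(n+1)=n(n+1)^3.
\]

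On the Genocchi side, the substitution $t^2\mapsto t$ in \eqref{genocchi} gives $\sum_{n\ge0}G_{2n+2}\,t^n=S(t;\alpha_1,\alpha_2,\dots)$ with $\alpha_{2j-1}=j^2$ and $\alpha_{2j}=j(j+1)$. Because $G_2=1$ and $\gamma_0=\alpha_1=1$, we have $\sum_{n\ge0}G_{2n+4}\,t^n=t^{-1}\bigl(S(t;\alpha_1,\alpha_2,\dots)-1\bigr)$, which is precisely the J-fraction produced by the odd contraction \eqref{contraction2}. Its coefficients are $\beta_n'=\alpha_{2n}\alpha_{2n+1}=n(n+1)\cdot(n+1)^2=n(n+1)^3$ and $\gamma_n'=\alpha_{2n+1}+\alpha_{2n+2}=(n+1)^2+(n+1)(n+2)=(n+1)(2n+3)$. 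These are exactly the $\lambda_n$ and $b_n$ found above, so the two J-fractions are identical and \eqref{gandhi} follows; the values $\mu_0(1,1,1)=1=G_4$ and $\mu_1(1,1,1)=3=G_6$ coming from \eqref{df} serve as a check on the normalization and on the index shift by $2$.

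The only delicate point---hence the main obstacle---is the index bookkeeping in the contraction step: one must use the odd contraction that yields the J-fraction of the shifted series $t^{-1}(S-1)$, with the alignment $\gamma_n'=\alpha_{2n+1}+\alpha_{2n+2}$ and $\beta_n'=\alpha_{2n}\alpha_{2n+1}$, since an off-by-one in either sequence would break the match. A subordinate point is to record cleanly that the moment generating function equals the recurrence-built J-fraction, but this is the same principle already invoked for \eqref{W-JCF}, now transported through the $d\to\infty$ limit. One could alternatively note that \eqref{df} is the defining recurrence of the Dumont--Foata polynomials \cite{DF} and quote the Gandhi conjecture proved in \cite{Ca1, RS}, but carrying out the continued-fraction comparison keeps the proof self-contained within the methods developed here.
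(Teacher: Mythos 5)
Your proof is correct, but it is not the paper's proof. The paper proves \eqref{gandhi} analytically, directly from the integral representation \eqref{Hmoment}: using $|\Gamma(1+ix)|^2=\pi x/\sinh(\pi x)$ it rewrites $\mu_n(1,1,1)$ as $2\pi\int_0^\infty x^{2n+4}\cosh(\pi x)\sinh^{-2}(\pi x)\,dx$, integrates by parts to get $4(n+2)\int_0^\infty x^{2n+3}/\sinh(\pi x)\,dx$, and concludes from the classical integral for $|B_{2n}|$ together with $G_{2n}=2(2^{2n}-1)|B_{2n}|$ --- this is what the authors mean by ``a direct proof starting from \eqref{Hmoment}.'' Your route instead matches the J-fraction built from the continuous dual Hahn recurrence coefficients at $a=b=c=1$ (correctly obtained as $b_n=(n+1)(2n+3)$, $\lambda_n=n(n+1)^3$, whether via the $d\to\infty$ limit of the Wilson data or from \eqref{recurrence-dual Hahn}) against the odd contraction of Viennot's S-fraction \eqref{genocchi}; your index bookkeeping $\gamma_n'=\alpha_{2n+1}+\alpha_{2n+2}$, $\beta_n'=\alpha_{2n}\alpha_{2n+1}$ is the right one (and silently repairs an apparent off-by-one in the displayed formula for $\gamma_n'$ after \eqref{contraction2}), and the resulting coefficients do coincide, so the argument goes through. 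The trade-off: your proof is purely algebraic and is exactly the technique the paper itself uses later for the median Genocchi corollary via \eqref{mgenocchi} and \eqref{dumont-ex-conj}, but it imports Viennot's S-fraction \eqref{genocchi} as a nontrivial external input, so it is less self-contained than you claim; the paper's integration-by-parts proof needs only the weight function and a classical Bernoulli integral, which is precisely the point of offering a ``direct'' proof of Gandhi's conjecture.
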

\pf
Since $|\Gamma(ix)|^2=\frac{\pi}{x\sinh({\pi}x)}$ and $|\Gamma(1+ix)|^2=\frac{\pi x}{\sinh(\pi x)}$, we have
\begin{align*}
\mu_n(1,1,1)&=\frac{1}{2\pi}\int_0^\infty x^{2n}
\left|\frac{(\Gamma(1+ix))^3}{\Gamma(2ix)}\right|^2dx\cr
&=2\pi\int_0^\infty x^{2n+4} \frac{\cosh ({\pi}x)}{\sinh^2({\pi}x)}dx\cr
&= -2\int_0^\infty x^{2n+4} d(1/(\sinh({\pi}x))).
\end{align*}
Integrating by parts yields
$$
\mu_n(1,1,1)= 4(n+2)\int_0^\infty \frac{x^{2n+3} dx}{\sinh({\pi}x)}.
$$
Equation (\ref{gandhi}) follows then from the known
integral expression of Bernoulli numbers~\cite[p.~39]{Er}:
$$
|B_{2n}|=\frac{2n}{2^{2n}-1}\int_0^\infty \frac{x^{2n-1} dx}{\sinh({\pi}x)},
$$
and the formula $G_{2n}=2(2^{2n}-1)|B_{2n}|$. \qed \medskip

From the J-fraction \eqref{W-JCF}  for the moments of 
Wilson polynomials we derive 
the following J-fraction
\begin{align}
\sum_{n\geq 0}\mu_{n}(a, b, c)t^n
&=J(t; (ab+bc+ca),\, (a+b)(b+c)(c+a),\dots\nonumber\\
&\qquad (a+n)(b+n)+(b+n)(c+n)+(c+n)(a+n)-n(n+1),\nonumber\\
 &\qquad (n+1)(a+b+n)(b+c+n)(c+a+n),\ldots).
\end{align}

By \eqref{w:a=0}, when $a=0$ we have  the S-fraction 
\begin{align}
\sum_{n=0}^\infty \mu_{n}(0, b,c) t^n=
S(t;\, bc,\, b+c,\, (b+1)(c+1), \, 2(b+c+1),\ldots ),
\end{align}
as is well-known, and $b=c=1$ gives the Genocchi numbers (see \eqref{genocchi}). 
In other words, the Genocchi numbers $G_{2n+2}$ are the moments of
the continuous dual Hahn polynomials $p_n(x, 0,1,1)$.

As we recall, Carlitz~\cite{Ca2} gave a cumbersome formula 
for the
exponential generating function of $\mu_n(a,b,c)$. By \eqref{gfwzero} the corresponding generating functions for $a=0$ is
 \begin{equation}\label{mugfzero}
\sum_{n=0}^\infty \mu_{n}(0, b,c)\frac{t^{2n}}{(2n)!}
={}_2F_1\left(\begin{array}{c}
b, \,c\cr
\half
\end{array};\sin^2\frac{t}{2}\right),
\end{equation}
which is equivalent to Carlitz's \cite[Equation (4.2)]{Ca2}.
In particular, the exponential generating function of Genocchi numbers $G_{2n+2}=\mu_n(0,1,1)$ also has the
hypergeometric series representations:
\begin{align*}
1+\sum_{n=1}^{\infty}G_{2n+2}{t^{2n}\over (2n)!}
&={}_2F_1\left(
\ab{\,1,\ 1}{\half}
;\sin^2\frac t2  \right)\\
&=\sec^2\frac t2 \,{}_2F_1\left(
\ab{\,1, \ -\half}{\half}
;-\tan^2\frac t2\right),
\end{align*}
where the last formula follows from Pfaff's transformation (see \cite[p.~68]{AAR}).

The J-fraction for $\mu_n(\tfrac12,b,c)$ is 
\begin{multline}
\sum_{n=0}^\infty \mu_{n}(\tfrac12,b,c)t^n=J(t;\; bc+\frac{1}{2}(b+c),\; (b+c)(b+1/2)(c+1/2),\\ 
 \qquad bc+\frac{5}{2}(b+c)+2\cdot 1^2,\;(b+c+1)(b+3/2)(c+3/2),\\
  bc+\frac{9}{2}(b+c)+2\cdot 2^2, (b+c+2)(b+5/2)(c+5/2), \ldots). 
\end{multline}

By \eqref{gfwhalf} the corresponding generating functions for 
 $a=\half$ is
 \begin{equation}\label{mugfhalf}
 \sum_{n=0}^\infty \mu_{n}(\half, b,c)\frac{t^{2n+1}}{(2n+1)!}
=2\sin\frac{t}{2} \;{}_2F_1\left(\begin{array}{c}
b+\half, \,c+\half\cr
\tfrac {3}{2}
\end{array};\sin^2\frac{t}{2}\right).
\end{equation}
The following  two nice special cases of \eqref{mugfzero} and \eqref{mugfhalf} are known (see \cite[p.~101]{Er}):
\begin{align}
\frac{\cos (at/2)}{\cos t/2}&={}_2F_1\left(\begin{array}{c}
(1-a)/2, \,(1+a)/2\cr
1/2
\end{array};\sin^2\frac{t}{2}\right),\\
\frac{\sin(at/2)}{a\sin t/2}&={}_2F_1\left(\begin{array}{c}
(1-a)/2, \,(1+a)/2\cr
3/2
\end{array};\sin^2\frac{t}{2}\right).
\end{align}

To find the orthogonal polynomials whose moments are median Genocchi numbers we shall consider the \emph{generalized 
Dumont-Foata polynomials} in 6 variables due to Dumont~\cite{Du}:
$$
\Gamma_{n+1}(\alpha, \bar \alpha):=
\Gamma_{n+1}(\alpha, \bar \alpha, \beta, \bar\beta, \gamma, \bar\gamma),
$$ 
which can be   defined by the
J-fraction 
\begin{multline}\label{dumont-ex-conj}
\sum_{n\geq 0}\Gamma_{n+1}(\alpha, \bar \alpha)t^n
=J(t;\,
\alpha\bar \beta+\beta\bar \gamma+\gamma\bar \alpha,\,
(\bar \alpha+\beta)(\bar \beta+\gamma)(\bar \gamma+\alpha),\,\ldots,  \\
(\alpha+n)(\bar \beta+n)+(\beta+n)(\bar \gamma+n)+(\gamma+n)(\bar \alpha+n) -n(n+1),\\
 (n+1)(\bar \alpha+\beta+n)(\bar \beta+\gamma+n)(\bar \gamma+\alpha+n),\ldots). 
\end{multline}

\begin{thm}
The sequence $\{\Gamma_{n+1}(\alpha,\bar \alpha)\}_n$ is the moment sequence of the rescaled 
continuous dual Hahn polynomials $Z_n(x)=p_n(x+d;a,b,c)$ with 
\begin{equation}
\label{abcd}
\left\{
\begin{aligned}
a&=\half(\alpha +\bar \alpha +\beta -\bar \beta +\bar \gamma -\gamma), \\
b&=\half(\bar \alpha-\alpha+\beta+\bar \beta+\gamma-\bar \gamma),\\
c&=\half(\alpha-\bar \alpha-\beta+\bar \beta+\gamma+\bar \gamma),\\
d&=\alpha\bar \alpha+\alpha(\beta-\bar \beta)-\bar \alpha(\gamma-\bar \gamma)-a^2.
\end{aligned}
\right.
\end{equation}
More precisely, let  $\psi \colon K[x]\longrightarrow K$  be a {\sl linear functional}
such that $\psi(x^n)=\Gamma_{n+1}(\alpha,\bar \alpha)$ for $n\geq 0$. Then
\begin{equation}\label{zorth}
\psi(Z_m(x)Z_n(x))=n!\,(\bar \alpha+\beta)_n(\alpha+\bar \gamma)_n
(\bar \beta+\gamma)_n\delta_{mn}.
\end{equation}
where
\begin{align}\label{defZ}
Z_n(x)
&=\sum_{k=0}^n(-1)^{n-k}{n\choose k}(\bar a+\beta +k)_{n-k}
(\alpha+\bar \gamma +k)_{n-k}\nonumber \\
&\quad\times \prod_{l=0}^{k-1}\left[x^2+(\alpha+l)
(\bar \alpha +l)+(\alpha +l)(\beta -\bar \beta )-(\bar \alpha+l)(\gamma-\bar \gamma)\right].
\end{align}
\end{thm}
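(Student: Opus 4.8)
The plan is to prove everything by matching the two J-fraction expansions. I would compare the defining J-fraction \eqref{dumont-ex-conj} of $\Gamma_{n+1}(\alpha,\bar\alpha)$ with the J-fraction for $\sum_n\mu_n(a,b,c)t^n$ derived above from \eqref{W-JCF}. The continuous dual Hahn polynomials $p_n$ in \eqref{hahn} are polynomials in their argument, and translating that argument by $d$ (i.e.\ passing to $Z_n(x)=p_n(x+d)$) is the one operation that covariantly changes a three-term recurrence: it subtracts $d$ from every diagonal coefficient $b_n$ of the recurrence \eqref{recurrence-favard} and leaves every off-diagonal coefficient $\lambda_n$ unchanged. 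Hence it suffices to verify that the substitution \eqref{abcd} carries the Hahn J-fraction, after subtracting $d$ from each diagonal entry, termwise onto \eqref{dumont-ex-conj}. Once this identity of J-fractions is established, $\Gamma_{n+1}(\alpha,\bar\alpha)$ are exactly the moments of the shifted family $Z_n$ (using $\Gamma_1=\mu_0=1$), and both the explicit form \eqref{defZ} and the orthogonality \eqref{zorth} will drop out as corollaries.

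The first thing I would record are the elementary linear relations forced by \eqref{abcd}, namely
\[
a+b=\bar\alpha+\beta,\qquad b+c=\bar\beta+\gamma,\qquad a+c=\alpha+\bar\gamma,
\]
together with $\alpha+\bar\alpha+\beta+\bar\beta+\gamma+\bar\gamma=2(a+b+c)$. The three product relations make the off-diagonal coefficients match instantly, since the Hahn entry $(n+1)(a+b+n)(b+c+n)(a+c+n)$ becomes $(n+1)(\bar\alpha+\beta+n)(\bar\beta+\gamma+n)(\bar\gamma+\alpha+n)$, which is precisely the off-diagonal term of \eqref{dumont-ex-conj}. For the diagonal I would observe that both $a_n^{\mathrm{DF}}=(\alpha+n)(\bar\beta+n)+(\beta+n)(\bar\gamma+n)+(\gamma+n)(\bar\alpha+n)-n(n+1)$ and $a_n^{\mathrm{Hahn}}-d$ are quadratics in $n$; the leading coefficients agree trivially and the linear coefficients agree by the sum relation above. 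Thus the whole diagonal identity for all $n$ collapses to the single constant-term equality at $n=0$.

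That constant-term equality, $\alpha\bar\beta+\beta\bar\gamma+\gamma\bar\alpha=(ab+bc+ca)-d$, is the one genuinely non-automatic computation and is where the precise choice of $d$ in \eqref{abcd} earns its keep; I would treat it as the main (if shallow) obstacle, since it is pure bookkeeping in six variables. The clean way through is to substitute the definition of $d$ and cancel the terms $\alpha\bar\beta$ and $\gamma\bar\alpha$, which reduces the claim to $ab+bc+ca+a^2=\alpha\bar\alpha+\alpha\beta+\beta\bar\gamma+\bar\alpha\bar\gamma$. The left side factors as $(a+b)(a+c)$, and the right side is exactly $(\bar\alpha+\beta)(\alpha+\bar\gamma)$, so the identity follows at once from the linear relations of the previous paragraph. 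This completes the matching of J-fractions and hence proves that $\{\Gamma_{n+1}(\alpha,\bar\alpha)\}_n$ is the moment sequence of $Z_n(x)=p_n(x+d;a,b,c)$.

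Finally I would harvest the two remaining assertions. Expanding the ${}_3F_2$ in \eqref{hahn} with $(a+ix)_k(a-ix)_k=\prod_{l=0}^{k-1}((a+l)^2+x^2)$ writes $p_n$ as a sum over $k$ of $(-1)^{n-k}\binom nk(a+b+k)_{n-k}(a+c+k)_{n-k}\prod_{l}((a+l)^2+x^2)$; applying the shift and the per-factor identity
\[
(a+l)^2+d=(\alpha+l)(\bar\alpha+l)+(\alpha+l)(\beta-\bar\beta)-(\bar\alpha+l)(\gamma-\bar\gamma),
\]
which is just the constant-and-linear-in-$l$ check using $2a=(\alpha+\bar\alpha)+(\beta-\bar\beta)-(\gamma-\bar\gamma)$, converts it directly into \eqref{defZ} (with $a+b=\bar\alpha+\beta$, $a+c=\alpha+\bar\gamma$). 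For orthogonality \eqref{zorth}, since $\psi$ is the translate of the Hahn functional, one has $\psi(Z_mZ_n)=\prod_{i=1}^n\lambda_i\,\delta_{mn}$ with $\lambda_i$ the (shift-invariant) off-diagonal coefficients; the product telescopes to $n!\,(\bar\alpha+\beta)_n(\bar\beta+\gamma)_n(\alpha+\bar\gamma)_n$, which is the asserted norm. Equivalently this is the Hahn norm $n!\,(a+b)_n(a+c)_n(b+c)_n$ rewritten through the three linear relations, so no new computation is needed.
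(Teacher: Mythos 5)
Your proposal is correct and follows essentially the same route as the paper: shift the known three-term recurrence of the continuous dual Hahn polynomials by $d$ (which subtracts $d$ from each diagonal coefficient and fixes the off-diagonal ones) and check that the substitution \eqref{abcd} turns the resulting coefficients into exactly those of the J-fraction \eqref{dumont-ex-conj}, using $a+b=\bar\alpha+\beta$, $a+c=\alpha+\bar\gamma$, $b+c=\bar\beta+\gamma$. You merely supply more detail than the paper does, in particular the explicit constant-term verification $ab+bc+ca-d=\alpha\bar\beta+\beta\bar\gamma+\gamma\bar\alpha$ and the derivations of \eqref{defZ} and \eqref{zorth}, which the paper leaves implicit.
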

\begin{proof} 
It is known that the continuous dual Hahn polynomials $p_n(x)$ have  the recurrence relation~\cite[(9.3.5)]{KLS}:
\begin{equation}\label{recurrence-dual Hahn}
p_{n+1}(x)=(x-b_n)p_n(x)-\lambda_{n}p_{n-1}(x) \qquad (n\geq 0),
\end{equation}
where
\begin{align}\begin{cases}
b_n&=(n+a)(n+b)+(n+a)(n+c)+(n+b)(n+c)-n(n+1),\\
\lambda_{n}&=n(n-1+a+b)(n-1+a+c)(n-1+b+c).
\end{cases}
\end{align}
Thus, the polynomials  $Z(x)=p_n(x+d, a,b,c)$ with the substitution
\eqref{abcd}
have 
the recurrence relation
\begin{equation}\label{re3}
Z_{n+1}(x)=(x-\bar{b}_n)Z_n(x)-\bar{\lambda}_nZ_{n-1}(x) \qquad (n\geq 0)
\end{equation}
with
\begin{equation}\label{bn}
\begin{cases}
\bar{b}_n=(n+\alpha)(n+\bar \beta)+(n+\bar \alpha)(n+\gamma)
+(n+\beta)(n+\bar \gamma)-n(n+1),\\
\bar{\lambda}_{n}=n(n-1+\bar \alpha+\beta)
(n-1+\alpha+\bar \gamma)(n-1+\bar \beta+\gamma).
\end{cases}
\end{equation} 
This recurrence is equivalent 
to the J-fraction \eqref{dumont-ex-conj} for  $\Gamma_n(\alpha,\bar \alpha)$.
\end{proof}

\begin{prop} We have 
\begin{align}\label{recurrence-gamma}
\Gamma_{n+1}(\alpha,\bar \alpha)&=(\alpha+\bar \gamma)(\beta+\bar \alpha)
\Gamma_{n} (\alpha+1, \bar \alpha+1)\nonumber\\
 &\qquad+ [\alpha(\bar \beta-\beta)-\bar \alpha(\bar \gamma-\gamma)-\alpha
\bar \alpha]\Gamma_{n}(\alpha,\bar \alpha)
\end{align}
with $\Gamma_1(\alpha,\bar \alpha) =1$,
and 
\begin{align}\label{ogfgDF}
\sum_{n\geq 0}& \Gamma_{n+1}(\alpha, \bar \alpha)t^n\nonumber\\
&=\sum_{n\geq 0}\frac{(\alpha+\bar\gamma)_{n}(\beta+\bar \alpha)_{n}t^n}{\prod_{k=0}^{n}
\left( 1-[(\alpha+k)(\bar\beta-\beta)-(\bar\alpha+k)(\bar\gamma-\gamma)-(\alpha+k)(\bar\alpha+k)]t \right)}.
\end{align}
\end{prop}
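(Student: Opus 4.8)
The plan is to exploit the preceding theorem, which identifies $\Gamma_{n+1}(\alpha,\bar\alpha)$ with the moments of $Z_n(x)=p_n(x+d;a,b,c)$ under the substitution \eqref{abcd}. Since $Z_n$ is obtained from $p_n$ by translating its argument by $d$, the functional $\psi$ with $\psi(x^n)=\Gamma_{n+1}(\alpha,\bar\alpha)$ is the continuous dual Hahn functional precomposed with this translation; concretely $\Gamma_{n+1}(\alpha,\bar\alpha)=\sum_{j=0}^n\binom nj(-d)^{n-j}\mu_j(a,b,c)$. The two displayed formulas are equivalent: once \eqref{recurrence-gamma} is established, \eqref{ogfgDF} follows by iterating the resulting functional equation for $\sum_n\Gamma_{n+1}(\alpha,\bar\alpha)t^n$, exactly as \eqref{gf-F} was obtained from \eqref{df} and as in the earlier generating-function Proposition for the Wilson moments. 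So the real content is the recurrence \eqref{recurrence-gamma}, and I would prove that first.

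Before touching the recurrence I would set up the parameter dictionary implied by \eqref{abcd}. A short computation gives $a+b=\beta+\bar\alpha$ and $a+c=\alpha+\bar\gamma$, which already matches the numerator $(\alpha+\bar\gamma)_n(\beta+\bar\alpha)_n$ of \eqref{ogfgDF} against $(a+b)_n(a+c)_n$ in \eqref{gf-F}. The key structural fact I would isolate is that the shift $(\alpha,\bar\alpha)\mapsto(\alpha+1,\bar\alpha+1)$ corresponds under \eqref{abcd} to $a\mapsto a+1$ with $b,c$ unchanged and, crucially, $d$ \emph{invariant}: under the shift $a^2$ increases by $2a+1$, and the increment of $\alpha\bar\alpha+\alpha(\beta-\bar\beta)-\bar\alpha(\gamma-\bar\gamma)$ is also $2a+1$, so the two cancel in $d$. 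Writing $D_k:=(\alpha+k)(\bar\beta-\beta)-(\bar\alpha+k)(\bar\gamma-\gamma)-(\alpha+k)(\bar\alpha+k)$, I would also record the identity $(a+k)^2+d=-D_k$ for all $k\ge0$; its case $k=0$ says the bracketed coefficient in \eqref{recurrence-gamma} equals $D_0=-(a^2+d)$, and the general case identifies the denominator factors in \eqref{ogfgDF} with $1+((a+k)^2+d)\,t$.

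With this dictionary in hand, \eqref{recurrence-gamma} becomes the assertion that the shifted moments $\nu_n(a):=\sum_{j=0}^n\binom nj(-d)^{n-j}\mu_j(a,b,c)$ satisfy $\nu_{n+1}(a)=(a+b)(a+c)\nu_n(a+1)-(a^2+d)\nu_n(a)$, where $\nu_n(a+1)$ is formed with the \emph{same} $b,c,d$ (legitimate precisely because $d$ is invariant). I would prove this directly: substitute the defining binomial sums, replace $(a+b)(a+c)\mu_j(a+1,b,c)$ by $\mu_{j+1}(a,b,c)+a^2\mu_j(a,b,c)$ using \eqref{df}, and observe that the $a^2\nu_n(a)$ terms cancel against $-(a^2+d)\nu_n(a)$; what remains is $\sum_j\binom nj(-d)^{n-j}\mu_{j+1}(a,b,c)-d\,\nu_n(a)$, and after reindexing the first sum Pascal's rule $\binom{n}{i-1}+\binom ni=\binom{n+1}i$ collapses everything to $\nu_{n+1}(a)$. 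The initial value $\Gamma_1(\alpha,\bar\alpha)=\nu_0(a)=\mu_0(a,b,c)=1$ is De Branges' evaluation quoted above.

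Finally I would deduce \eqref{ogfgDF}. Setting $H(\alpha,\bar\alpha;t)=\sum_{n\ge0}\Gamma_{n+1}(\alpha,\bar\alpha)t^n$, the recurrence yields the functional equation $H(\alpha,\bar\alpha;t)\,(1-D_0 t)=1+(\alpha+\bar\gamma)(\beta+\bar\alpha)\,t\,H(\alpha+1,\bar\alpha+1;t)$, and iterating it (the numerators telescoping into $(\alpha+\bar\gamma)_n(\beta+\bar\alpha)_n$ and the linear factors accumulating as $\prod_{k=0}^n(1-D_k t)$) produces the stated product form. I expect the only delicate point to be the bookkeeping of the second paragraph—above all the invariance of $d$ and the coefficient identity $(a+k)^2+d=-D_k$—since every term in both \eqref{recurrence-gamma} and \eqref{ogfgDF} hinges on these cancellations; the binomial manipulation giving the recurrence and the subsequent iteration are then entirely routine.
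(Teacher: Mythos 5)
Your proposal is correct, and it reaches the result by a route that is recognizably different from the paper's, though built on the same two ingredients: the binomial shift relation $\Gamma_{n+1}(\alpha,\bar\alpha)=\sum_k\binom{n}{k}(-d)^{n-k}\mu_k(a,b,c)$ and the dictionary \eqref{abcd}. The paper goes in the opposite order: it declares \eqref{recurrence-gamma} and \eqref{ogfgDF} equivalent, proves only \eqref{ogfgDF}, and does so by pure generating-function algebra --- interchanging the order of summation to get $\sum_k\mu_k(a,b,c)\,t^k(1+dt)^{-k-1}$ and then substituting the known expansion \eqref{gf-F} of $\sum_k\mu_k s^k$ at $s=t/(1+dt)$, after which the denominator factors $1+\bigl((a+l)^2+d\bigr)t$ appear and are matched to \eqref{ogfgDF} by ``invoking \eqref{abcd}.'' You instead prove the recurrence \eqref{recurrence-gamma} first, directly from \eqref{df} and Pascal's rule applied to the binomial sums, and then obtain \eqref{ogfgDF} by iterating the resulting functional equation. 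The paper's computation is shorter (three lines), but it silently relies on exactly the bookkeeping you make explicit: that $a+b=\beta+\bar\alpha$, $a+c=\alpha+\bar\gamma$, that $(a+k)^2+d=-D_k$ (whose case $k=0$ is the definition of $d$ and whose general case uses the definition of $a$), and --- most importantly for the recurrence form of the statement --- that $d$ is invariant under $(\alpha,\bar\alpha)\mapsto(\alpha+1,\bar\alpha+1)$, without which $\Gamma_n(\alpha+1,\bar\alpha+1)$ could not be identified with a shifted moment for the \emph{same} rescaling. Your verification of these facts, and the Pascal-rule collapse in the recurrence, are correct, so the argument is complete; it is slightly longer than the paper's but arguably more transparent about where \eqref{abcd} is actually used.
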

\begin{proof}
Clearly, the recurrence \eqref{recurrence-gamma} is equivalent to the generating function~\eqref{ogfgDF}. So we just prove \eqref{ogfgDF}.
It is known (see \cite{Ch}) and easy to see  that 
the moments of 
 $p_n(x+d; a,b,c)$ are related to those of $p_n(x;a,b,c)$ as follows:
\begin{align}
\Gamma_{n+1}(\alpha, \bar \alpha)=\sum_{k=0}^n{n\choose k} (-d)^{n-k} \mu_k(a,b,c).
\end{align}
Therefore 
\begin{align*}
\sum_{n\geq 0} \Gamma_{n+1}(\alpha, \bar \alpha)t^n&=
\sum_{n\geq 0}\sum_{k=0}^n{n\choose k} (-d)^{n-k} \mu_k(a,b,c)\,t^n\\
&=\sum_{k\geq 0} \mu_k(a,b,c)t^k \,\sum_{n\geq 0}{n+k\choose k}(-d\,t)^{n}\\
&=\sum_{k\geq 0} \mu_k(a,b,c)t^k(1+dt)^{-k-1}.
\end{align*}
Invoking \eqref{gf-F} and \eqref{abcd} we 
derive the generating function.
\end{proof}

\begin{remark}
Originally 
Dumont \cite{Du} defined the polynomials 
$\Gamma_n(\alpha,\bar \alpha)$ combinatorially and 
conjectured the J-fraction in \eqref{dumont-ex-conj}. 
Randrianarivony~\cite{Ra}
and Zeng~\cite{Ze} proved Dumont's conjectured J-fraction by first establishing \eqref{ogfgDF} and \eqref{recurrence-gamma}
 from Dumont's combinatorial definition for 
 $\Gamma_{n+1}(\alpha,\bar \alpha)$. In 2010
Josuat-Verg\`es \cite{JV} gave a new 
 proof of the  J-fraction\eqref{dumont-ex-conj} starting from \eqref{recurrence-gamma}.
\end{remark}

\begin{cor}
The median Genocchi numbers  
$H_{2n+1}$ are the moments of the rescaled continuous dual Hahn polynomials 
$p_n\bigl(x-\frac{1}{4}, \frac{1}{2}, \frac{1}{2},  \frac{1}{2}\bigr)$.
\end{cor}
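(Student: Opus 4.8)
The plan is to prove the corollary by a matching of continued fractions: I will show that the ordinary generating function $\sum_{n\ge0}H_{2n+1}t^{2n}$ and the ordinary generating function of the moments of $p_n(x-\tfrac14;\tfrac12,\tfrac12,\tfrac12)$ have the same J-fraction, and then invoke the standard correspondence between J-fractions and (normalized) moment sequences used throughout this paper (see \cite{Ch}). Two ingredients are needed: the S-fraction \eqref{mgenocchi} for the median Genocchi numbers, and the three-term recurrence \eqref{recurrence-dual Hahn} for the continuous dual Hahn polynomials.

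First I would convert \eqref{mgenocchi} into a J-fraction. Writing $u=t^2$, the series $\sum_{n\ge0}H_{2n+1}u^n$ equals $S(u;1^2,1^2,2^2,2^2,\dots)$, so its S-fraction coefficients are $\alpha_{2k-1}=\alpha_{2k}=k^2$. Applying the contraction formula \eqref{contraction1}, with $\gamma_0=\alpha_1$, $\gamma_n=\alpha_{2n}+\alpha_{2n+1}$ and $\beta_n=\alpha_{2n-1}\alpha_{2n}$, gives the J-fraction coefficients
\[
\gamma_n=n^2+(n+1)^2,\qquad \beta_{n+1}=(n+1)^4 .
\]

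Next I would compute the J-fraction of the moments of the rescaled polynomials directly from \eqref{recurrence-dual Hahn}. Setting $a=b=c=\tfrac12$ there yields $b_n=3\left(n+\tfrac12\right)^2-n(n+1)=2n^2+2n+\tfrac34$, while $\lambda_n=n(n-1+a+b)(n-1+a+c)(n-1+b+c)=n\cdot n\cdot n\cdot n=n^4$ because $a+b=a+c=b+c=1$. The rescaling $x\mapsto x-\tfrac14$ corresponds to $d=-\tfrac14$ in $Z_n(x)=p_n(x+d;a,b,c)$, and shifting the argument of a monic orthogonal polynomial family leaves $\lambda_n$ unchanged and replaces $b_n$ by $b_n-d=b_n+\tfrac14=2n^2+2n+1=n^2+(n+1)^2$. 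Hence the moments of $p_n(x-\tfrac14;\tfrac12,\tfrac12,\tfrac12)$ have a J-fraction with diagonal $n^2+(n+1)^2$ and off-diagonal $(n+1)^4$, coinciding exactly with the median Genocchi J-fraction found above; since both series have constant term $1$ (recall $H_1=1$), the two sequences agree term by term, which is the assertion of the corollary.

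I expect the only care needed is bookkeeping: verifying that the substitution $x\mapsto x+d$ sends $b_n\mapsto b_n-d$ while fixing $\lambda_n$, and keeping the index conventions of the contraction \eqref{contraction1} straight; everything else is a one-line computation. As an alternative route entirely inside the six-parameter framework, one checks that imposing $\bar\alpha+\beta=\bar\beta+\gamma=\bar\gamma+\alpha=1$ (from matching the off-diagonal entries $(n+1)^4$) together with $\alpha\bar\beta+\beta\bar\gamma+\gamma\bar\alpha=1$ (from matching the diagonal) forces the J-fraction \eqref{dumont-ex-conj} to equal the median Genocchi J-fraction; substituting any such parameters into \eqref{abcd} returns $a=b=c=\tfrac12$ and $d=-\tfrac14$, so by the Theorem the corollary follows. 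The direct route above is preferable because it avoids introducing the (complex) admissible values of the six parameters.
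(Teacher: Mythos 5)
Your argument is correct, and your main route is genuinely more direct than the one printed in the paper. The paper's proof contracts \eqref{mgenocchi}, matches the resulting J-fraction against the six-parameter J-fraction \eqref{dumont-ex-conj} to identify $H_{2n+1}$ with a specialization of $\Gamma_{n+1}$, and then reads off $a,b,c,d$ from \eqref{abcd}; you instead compute the three-term recurrence coefficients for $a=b=c=\tfrac12$ directly, shift by $d=-\tfrac14$, and match J-fractions, bypassing the Dumont--Foata machinery altogether. What your route buys is accuracy of the bookkeeping: the J-fraction \eqref{J-frac-MG} displayed in the paper, $J(t;\,2\cdot1^2,(1\cdot2)^2,\dots)$, is what the contraction \eqref{contraction2} (not the cited \eqref{contraction1}) produces, and it generates $H_{2n+3}$ rather than $H_{2n+1}$ (its linear coefficient is $2\neq H_3=1$); moreover the specialization $\Gamma_{n+1}(1,1,1,0,1,1)$ fed into \eqref{abcd} yields $a=\tfrac32$, $b=c=\tfrac12$, $d=-\tfrac14$, not $a=b=c=\tfrac12$. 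Your contraction via \eqref{contraction1} gives the diagonal $n^2+(n+1)^2$ and off-diagonal $(n+1)^4$ that really do encode $H_{2n+1}$, and your recurrence computation ($b_n=2n^2+2n+\tfrac34$, $\lambda_n=n^4$, with $x\mapsto x+d$ sending $b_n\mapsto b_n-d$ and fixing $\lambda_n$) lands exactly on these values, so your proof establishes the corollary as stated. Your alternative route is essentially the paper's argument carried out consistently: an admissible six-parameter specialization is, for instance, $\Gamma_{n+1}(1,1,0,0,1,0)$, for which \eqref{abcd} does return $a=b=c=\tfrac12$ and $d=-\tfrac14$.
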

\begin{proof}
Contracting the S-fraction \eqref{mgenocchi} 
by  \eqref{contraction1}
 we obtain  the following J-fraction
\begin{align}\label{J-frac-MG}
\sum_{n=0}^{\infty}H_{2n+1}t^{n}=J(t; 2\cdot 1^2, (1\cdot 2)^2,2\cdot 2^2, (2\cdot 3)^2, \ldots, 2\cdot n^2, (n\cdot (n+1))^2, \ldots). 
\end{align}
Comparing with \eqref{dumont-ex-conj} we see that 
$H_{2n+1}=\Gamma_{n+1}(1,1,1, 0,1,1)$. Hence $a=b=c=\frac{1}{2}$ and 
$d=-\frac{1}{4}$ by \eqref{abcd}.
\end{proof}

Our method does not  produce  an exponential generating function for the median Genocchi numbers here since the denominator in \eqref{ogfgDF} with $\alpha=1$ and $\bar \alpha=0$ does not  factorize nicely.
\section{Moments of Hahn polynomials}

In this section we introduce another method for determining  generating functions for moments of orthogonal polynomials. We apply it only to the Hahn polynomials, but it can also be used for the Wilson polynomials and 
Askey-Wilson polynomials (see  Theorem \ref{AWmoment}).

\begin{lem}
\label{l-nicole}
Let $a_0, a_1, a_2,\ldots$ be arbitrary. Then
\begin{equation}\label{nicole}
\sum_{n=0}^\infty (x+a_0)(x+a_1)\cdots
(x+a_{n-1})\frac{t^n}{(1+a_0t)\cdots (1+a_nt)}=\frac{1}{1-xt}.
\end{equation}
\end{lem}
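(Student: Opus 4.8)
The plan is to recognize the left-hand side of \eqref{nicole} as a telescoping series in the ring of formal power series in $t$. First I would absorb the factor $t^n$ into the product and set, for $n\geq 0$,
\[
Q_n=\prod_{k=0}^{n-1}\frac{(x+a_k)\,t}{1+a_k t},
\]
with the empty product $Q_0=1$. A direct rewriting shows that the $n$-th summand on the left of \eqref{nicole} is exactly $Q_n/(1+a_n t)$: distributing $t^n$ across the $n$ factors of the numerator produces precisely the factors $(x+a_k)t/(1+a_k t)$ for $0\le k\le n-1$, leaving one unmatched denominator factor $1+a_n t$.

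The key step is the telescoping identity
\[
\frac{Q_n}{1+a_n t}=\frac{1}{1-xt}\bigl(Q_n-Q_{n+1}\bigr).
\]
To verify it I would use $Q_{n+1}=Q_n\cdot(x+a_n)t/(1+a_n t)$ to compute $Q_n-Q_{n+1}=Q_n\bigl(1-\tfrac{(x+a_n)t}{1+a_n t}\bigr)=Q_n\cdot\tfrac{1-xt}{1+a_n t}$; dividing by $1-xt$ recovers the summand. Summing from $n=0$ to $N$ then collapses the series:
\[
\sum_{n=0}^N\frac{Q_n}{1+a_n t}=\frac{1}{1-xt}\bigl(Q_0-Q_{N+1}\bigr)=\frac{1}{1-xt}\bigl(1-Q_{N+1}\bigr).
\]
Finally, since each factor $(x+a_k)t/(1+a_k t)$ has $t$-order at least $1$, the partial product $Q_{N+1}$ has $t$-order at least $N+1$ and tends to $0$ in the $t$-adic topology as $N\to\infty$, leaving $1/(1-xt)$.

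I expect the only real obstacle to be guessing the right telescoping form, namely seeing that the factor $1-xt$ should be pulled out and that the natural building block is $Q_n$ rather than the raw summand. Once that is in place the computation is routine, and the convergence claim is automatic at the level of formal power series, with no analytic estimate needed. One could alternatively verify the lemma by extracting the coefficient of $t^m$ on both sides and checking a polynomial identity in $x,a_0,\dots,a_m$, but the telescoping argument is shorter and avoids that bookkeeping.
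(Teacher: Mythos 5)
Your proof is correct and is essentially the paper's own argument: your partial-sum identity $\sum_{n=0}^N Q_n/(1+a_nt)=\frac{1}{1-xt}(1-Q_{N+1})$ is exactly the indefinite-sum formula the paper states and proves by induction (your telescoping computation being precisely that induction step), followed by the same formal limit $N\to\infty$.
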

\begin{proof}
We have the indefinite sum
\begin{multline}\
\qquad\sum_{n=0}^m(x+a_0)\cdots (x+a_{n-1})\frac{t^n}{(1+a_0t)\cdots
(1+a_nt)}\\
=\frac{1}{1-xt}\left[1-(x+a_0)\cdots (x+a_m)\frac{t^{m+1}}{(1+a_0t)\cdots
(1+a_mt)}\right],\qquad
\end{multline}
which is easily
proved by induction. The lemma follows by taking $m\to \infty$.
\end{proof}

\begin{remark} As pointed out by Knuth \cite[equation (2.16)]{Kn}, a formula equivalent to Lemma \ref{l-nicole} was
discovered by Fran\c cois  Nicole \cite{Ni} in 1727. Nicole's formula was also used by Ap\'ery to derive the formula 
\[\zeta(3) = \frac52\sum_{n=1}^\infty \frac{(-1)^{n-1}}{n^3\binom{2n}{n}};\]
see van der Poorten \cite[section 3]{VDP}.
The identity in the proof is equivalent to a special 
case of Newton's interpolation formula \textup{(}see \cite{MT}\textup{)}. Moreover,
equating  coefficients of $t^n$ in \eqref{nicole} gives
the expansion of 
the polynomial $x^n$ in the basis
  $\{(x+a_0)\cdots (x+a_{k})\}_{0\leq k\leq n-1}$, in which the coefficents  can be computed 
   by Newton's interpolation formula. We give 
    such an example for the moments of Askey-Wilson polynomials  at the end of this paper. 
\end{remark}

The following result is an immediate consequence of Lemma \ref{l-nicole}.

\begin{prop}
\label{p-moments}
Let $L$ be a linear functional defined on polynomials in $x$ and let $a_0,a_1,\dots$ be complex numbers or indeterminates that do not involve $x$. Let 
\begin{equation*}
v_n = L\bigl((x+a_0)(x+a_1)\cdots(x+a_{n-1})\bigr).
\end{equation*}
Then 
\begin{equation}\label{momentgf}
\sum_{n=0}^\infty L(x^n) t^n = \sum_{n=0}^\infty v_n\frac{t^n}{(1+a_0t)\cdots (1+a_nt)}.
\end{equation}
\end{prop}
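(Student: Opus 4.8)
The plan is to apply Lemma~\ref{l-nicole} after introducing the linear functional $L$ into the identity. The key observation is that \eqref{momentgf} is simply what one obtains by letting $L$ act on the variable $x$ in the polynomial identity \eqref{nicole}, treating $t$ and the $a_i$ as scalars (or indeterminates) that are inert under $L$.

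First I would note that both sides of \eqref{nicole} are, for each fixed $t$, formal power series in $t$ whose coefficients are polynomials in $x$; more precisely, the left side is a sum over $n$ in which the coefficient of each power of $t$ is a polynomial in $x$. Since $L$ is linear, I would apply $L$ termwise to the left-hand side of \eqref{nicole}. The factor $t^n/\bigl((1+a_0t)\cdots(1+a_nt)\bigr)$ does not involve $x$, so by linearity it pulls outside $L$, giving
\begin{equation*}
L\left(\sum_{n=0}^\infty (x+a_0)\cdots(x+a_{n-1})\frac{t^n}{(1+a_0t)\cdots(1+a_nt)}\right)
=\sum_{n=0}^\infty L\bigl((x+a_0)\cdots(x+a_{n-1})\bigr)\frac{t^n}{(1+a_0t)\cdots(1+a_nt)}.
\end{equation*}
By the definition of $v_n$, the right side of this equation is exactly the right side of \eqref{momentgf}. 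Meanwhile, applying $L$ to the right-hand side of \eqref{nicole}, namely $1/(1-xt)=\sum_{n\geq 0}x^nt^n$, and again using linearity with $t^n$ inert, yields $\sum_{n=0}^\infty L(x^n)t^n$, which is the left side of \eqref{momentgf}. Equating the two gives the result.

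The only point requiring care—and the main obstacle, such as it is—is the justification for applying $L$ termwise and for interchanging $L$ with the infinite sum. Since we are working at the level of formal power series in $t$, this is unproblematic: the coefficient of $t^n$ on the left side of \eqref{nicole} is a single polynomial in $x$ (a finite combination of the terms indexed by $0,1,\dots,n$), so no genuine infinite summation inside $L$ occurs; $L$ acts coefficientwise, and linearity suffices. Thus the interchange is a formal identity of power series rather than an analytic limit, and no convergence issues arise. This is why the proposition is stated as an immediate consequence of the lemma, and the proof reduces to the single sentence that applying the linear functional $L$ to both sides of \eqref{nicole} yields \eqref{momentgf}.
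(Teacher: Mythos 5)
Your proposal is correct and is exactly the argument the paper intends: the paper states Proposition~\ref{p-moments} as ``an immediate consequence of Lemma~\ref{l-nicole},'' i.e., apply the linear functional $L$ coefficientwise to both sides of \eqref{nicole}. Your extra remark that the coefficient of each power of $t$ is a finite combination of terms, so the interchange of $L$ with the sum is a purely formal matter, is a sound justification of the step the paper leaves implicit.
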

\begin{remark}\label{rem5}  
Substituting $x$ by $x+T$ and 
$a_j$ by $a_j-T$ ($j\in \N$) for any complex variable  $T$ in the above equation yields the 
  formula for  $L((x+T)^n)$.
\end{remark}

The Hahn polynomials are defined in \cite{KLS} by
\begin{equation} 
\label{hahn1}
Q_n(x; \alpha, \beta, N)={}_3F_2\left(\begin{array}{c} -n, n+\alpha+\beta+1, -x\cr
\alpha+1, -N
\end{array};1\right).
\end{equation}
(Some authors define $Q_n(x; \alpha, \beta, N)$ with $-N+1$ replacing $-N$ in \eqref{hahn1}.)
If $N$ is a nonnegative integer then the polynomials  $Q_n(x; \alpha, \beta, N)$  are defined 
only for $n=0,1,\dots, N$ and it is known that they
are orthogonal with respect to the linear functional $L_0$ given by
\begin{equation}
\label{e-L0}
L_0(p(x))=\sum_{x=0}^N\binom{\alpha+x}{x}\binom{\beta+N-x}{N-x}p(x).
\end{equation}
Applying Vandermonde's theorem we find that
$$
L_0\bigl((x+\alpha+1)_n\bigr)=\frac{(\alpha+\beta+2)_N}{N!}\cdot\frac{(\alpha+1)_n(\alpha+\beta+N+2)_n}{(\alpha+\beta+2)_n}.
$$

This suggests that in studying moments of Hahn polynomials we reparametrize them  by setting $\alpha=A-1$, 
$\beta=C-A-1$,  and $N=B-C$ so that $\alpha+1=A$, $\alpha+\beta+N+2=B$, and $\alpha+\beta+2=C$.


Thus we  define polynomials $R_n(x;A,B,C)$ by 
\begin{align*}
R_n(x; A, B, C)&=Q_n(x; A-1,C-A-1, B-C)\\
&={}_3F_2\left(\begin{array}{c}
-n, n+C-1, -x\cr
A, C-B
\end{array};1\right)
\end{align*}
where $A$, $B$, and $C$ are indeterminates. We will show that these polynomials are orthogonal with respect to the  linear functional $L$ on polynomials in $x$ defined by
\begin{equation}
\label{e-HahnL}
L\bigl((x+A)_m\bigr)=\frac{(A)_m(B)_m}{(C)_m}.
\end{equation}

The polynomials $R_n(x;A,B,C)$ are closely related to the continuous Hahn polynomials,  defined in \cite{KLS} by 
\begin{equation*}
p_n(x;a,b,c,d) = i^n \frac{(a+c)_n(a+d)_n}{n!} {}_3F_2\left(\begin{array}{c}
-n, n+a+b+c+d-1, a+ix\cr
a+c,a+d
\end{array};1\right).
\end{equation*}
Thus
\begin{equation*}
R_n(x;A,B,C)= (-i)^n \frac{n!} {(A)_n (C-B)_n} p_n(ix; 0, B-A, A,C-B)
\end{equation*}
and
\begin{equation}
p_n(x;a,b,c,d) =  i^n \frac{(a+c)_n(a+d)_n}{n!} R_n(-a-ix; a+c, b+c, a+b+c+d).
\end{equation}

\begin{lem}
For nonnegative integers $m$ and $n$ we have
\begin{equation}
\label{e-HahnL1}
L\bigl((x+A)_m(-x)_n\bigr)=\frac{(A)_{m+n}(B)_m(C-B)_n}{(C)_{m+n}}.
\end{equation}
\end{lem}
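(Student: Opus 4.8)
The plan is to prove this by induction on $n$, with $m\ge 0$ arbitrary throughout, in exactly the spirit of the computation \eqref{mnMoments} for the Wilson case. The base case $n=0$ is immediate: the left side is $L\bigl((x+A)_m\bigr)$, which by \eqref{e-HahnL} equals $(A)_m(B)_m/(C)_m$, and the right side reduces to the same expression since $(C-B)_0=1$.

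For the inductive step I would rewrite the factor $(-x)_{n+1}$ so that only lower-order terms, to which the induction hypothesis applies, remain. Using $(-x)_{n+1}=(-x)_n\,(n-x)$ together with the two elementary identities
\[
n-x=(A+n)-(x+A), \qquad (x+A)_m\,(x+A)=(x+A)_{m+1}-m\,(x+A)_m,
\]
I obtain the three-term reduction
\[
(x+A)_m\,(-x)_{n+1}=(A+m+n)\,(x+A)_m\,(-x)_n-(x+A)_{m+1}\,(-x)_n.
\]
Applying $L$ and invoking the induction hypothesis at index $n$---once with parameter $m$ and once with $m+1$---expresses $L\bigl((x+A)_m(-x)_{n+1}\bigr)$ as a combination of $\tfrac{(A)_{m+n}(B)_m(C-B)_n}{(C)_{m+n}}$ and $\tfrac{(A)_{m+n+1}(B)_{m+1}(C-B)_n}{(C)_{m+n+1}}$.

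The final step is routine simplification. I would factor out $\tfrac{(A)_{m+n}(B)_m(C-B)_n}{(C)_{m+n}}$ and the common factor $(A+m+n)$; the remaining bracket collapses by the telescoping identity $1-\tfrac{B+m}{C+m+n}=\tfrac{C-B+n}{C+m+n}$, and reassembling the Pochhammer shifts $(A)_{m+n}(A+m+n)=(A)_{m+n+1}$, $(C-B)_n(C-B+n)=(C-B)_{n+1}$, and $(C)_{m+n}(C+m+n)=(C)_{m+n+1}$ yields precisely $\tfrac{(A)_{m+n+1}(B)_m(C-B)_{n+1}}{(C)_{m+n+1}}$, completing the induction. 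No genuine obstacle arises: the only real idea is the decomposition of $n-x$ and of $(x+A)_m(x+A)$, which is what allows the induction hypothesis to act through the linearity of $L$, and everything else is bookkeeping with shifted Pochhammer symbols.
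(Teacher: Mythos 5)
Your proof is correct, but it follows a genuinely different route from the one in the paper. You argue by induction on $n$, using the reduction
$(x+A)_m(-x)_{n+1}=(A+m+n)\,(x+A)_m(-x)_n-(x+A)_{m+1}(-x)_n$
(equivalently, $n-x=(A+m+n)-(x+A+m)$ together with $(x+A)_m(x+A+m)=(x+A)_{m+1}$), and the telescoping step $1-\tfrac{B+m}{C+m+n}=\tfrac{C-B+n}{C+m+n}$ does close the induction exactly as you say, advancing $(A)_{m+n}$, $(C-B)_n$ and $(C)_{m+n}$ each by one. The paper instead gives a non-inductive computation: it expands $(-x)_n=\sum_{i=0}^n(-1)^i\binom{n}{i}(x+A+m)_i(A+m+i)_{n-i}$ by Vandermonde's theorem, applies $L$ term by term via \eqref{e-HahnL} using $(x+A)_m(x+A+m)_i=(x+A)_{m+i}$, and then evaluates the resulting alternating sum by Vandermonde's theorem a second time. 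The two arguments are close relatives --- your induction, unrolled, produces essentially the paper's alternating sum --- and in fact your approach mirrors what the paper itself does for the Wilson analogue \eqref{mnMoments}, which it verifies by induction. What the paper's version buys is brevity and a closed-form derivation with no bookkeeping; what yours buys is self-containedness, since it needs no hypergeometric summation theorem, only linearity of $L$ and Pochhammer manipulations.
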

\begin{proof}
By Vandermonde's theorem we have 
\begin{equation*}
(-x)_n=\sum_{i=0}^n (-1)^i \binom{n}{i} (x+A+m)_i (A+m+i)_{n-i}.
\end{equation*}
Thus 
\begin{align*}
L\bigl((x+A)_m(-x)_n\bigr) &= L\biggl( \sum_{i=0}^n (-1)^i \binom{n}{i} (x+A)_{m+i} (A+m+i)_{n-i} \biggr)\\
  &=\sum_{i=0}^n (-1)^i \binom{n}{i} \frac{(A)_{m+i}(B)_{m+i}}{(C)_{m+i}} (A+m+i)_{n-i}\\
  &=\frac{(A)_{m+n}(B)_m(C-B)_n}{(C)_{m+n}}
\end{align*}
by Vandermonde's theorem again.
\end{proof}

%

\begin{prop}
The polynomials $R_n(x;A,B,C)$ are orthogonal with respect to the linear functional $L$, and 
\begin{equation}
\label{e-RR}
L\bigl(R_n(x; A,B,C)^2\bigr) = (-1)^n n!\frac{(B)_n(C-A)_n}{(A)_n (C-B)_n (C)_{n-1}(C+2n-1)}.
\end{equation}
\end{prop}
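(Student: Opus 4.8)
The plan is to derive both the orthogonality and the squared norm from a single evaluation, namely $L\bigl((x+A)_m R_n(x)\bigr)$ for all $m,n\ge 0$, in the same spirit as the verification of orthogonality for the Wilson polynomials given earlier. Writing $R_n$ as the terminating series $\sum_{k=0}^{n}\frac{(-n)_k(n+C-1)_k}{(A)_k(C-B)_k\,k!}(-x)_k$ and applying the previous lemma \eqref{e-HahnL1} term by term, the factor $(C-B)_k$ cancels, and after splitting $(A)_{m+k}=(A)_m(A+m)_k$ and $(C)_{m+k}=(C)_m(C+m)_k$ I expect to be left with
\[
L\bigl((x+A)_m R_n(x)\bigr)=\frac{(A)_m(B)_m}{(C)_m}\,{}_3F_2\left(\begin{array}{c}-n,\ n+C-1,\ A+m\\ A,\ C+m\end{array};1\right).
\]
The prefactor is exactly $L((x+A)_m)$ from \eqref{e-HahnL}, which is a useful consistency check.

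The key observation is that this ${}_3F_2$ is Saalschützian: the parameter excess is $A+(C+m)-\bigl(-n+(n+C-1)+(A+m)\bigr)=1$, and one upper parameter is the negative integer $-n$. Hence the Pfaff--Saalsch\"utz theorem applies and should give
\[
L\bigl((x+A)_m R_n(x)\bigr)=\frac{(A)_m(B)_m}{(C)_m}\cdot\frac{(A-C-n+1)_n\,(-m)_n}{(A)_n\,(1-C-m-n)_n}.
\]
Since $(-m)_n=0$ whenever $0\le m<n$, and the polynomials $(x+A)_m$ for $0\le m\le n-1$ span all polynomials of degree $<n$, this yields at once that $R_n$ is orthogonal to every polynomial of lower degree, proving orthogonality of the family $\{R_n\}$.

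For the squared norm I would exploit this orthogonality: writing $R_n=c_n x^n+(\text{lower degree})$ gives $L(R_n^2)=c_n\,L(R_n x^n)$, and reading the leading coefficient off the $k=n$ term (using $(-n)_n=(-1)^n n!$ and the leading term $(-1)^n x^n$ of $(-x)_n$) gives $c_n=\frac{(C+n-1)_n}{(A)_n(C-B)_n}$. Because $(x+A)_n$ and $x^n$ agree up to lower-degree terms, $L(R_n x^n)=L\bigl((x+A)_n R_n\bigr)$, which is precisely the $m=n$ instance already computed. Substituting it and applying the reflection identities $(A-C-n+1)_n=(-1)^n(C-A)_n$ and $(1-C-2n)_n=(-1)^n(C+n)_n$ should collapse the signs and reduce everything to
\[
L(R_n^2)=(-1)^n n!\,\frac{(C+n-1)_n\,(B)_n\,(C-A)_n}{(A)_n(C-B)_n(C)_n(C+n)_n}.
\]

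The final step, where I expect the only genuine bookkeeping, is the telescoping $\frac{(C+n-1)_n}{(C)_n(C+n)_n}=\frac{1}{(C)_{n-1}(C+2n-1)}$: in the denominator product $C(C+1)\cdots(C+2n-1)$ the overlapping block $(C+n-1)(C+n)\cdots(C+2n-2)$ cancels against the numerator, leaving exactly $(C)_{n-1}(C+2n-1)$ and matching the claimed formula \eqref{e-RR}. Thus the main obstacle is not conceptual but computational: recognizing the Saalsch\"utzian balance of the ${}_3F_2$ and carrying out the reflection and telescoping identities on the Pochhammer symbols without sign errors.
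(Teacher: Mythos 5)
Your proposal is correct and follows essentially the same route as the paper: expand $R_n$ in the $(-x)_k$ basis, apply the lemma giving $L\bigl((x+A)_m(-x)_k\bigr)$, recognize the resulting Saalsch\"utzian ${}_3F_2$ and evaluate it by Pfaff--Saalsch\"utz, read off orthogonality from the factor $(-m)_n$, and obtain the norm from the $m=n$ case together with the leading coefficient (your $(C+n-1)_n/\bigl((A)_n(C-B)_n\bigr)$ is the same as the paper's $(C-1)_{2n}/\bigl((A)_n(C-B)_n(C-1)_n\bigr)$). All the Pochhammer reflections and the final telescoping check out.
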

\begin{proof}
To show that the polynomials $R_n$ are orthogonal, it suffices to show that for $0\le m<n$,
$$
L(R_n(x; A,B,C)(x+A)_m)=0.
$$
We have
\begin{align}
L(R_n(x; A,B,C)(x+A)_m)&= L\left(\sum_{k=0}^n\frac{(-n)_k(n+C-1)_k}{k!\,(A)_k(C-B)_k}
    (-x)_k(x+A)_m\right)\notag\\ 
&= \sum_{k=0}^n\frac{(-n)_k(n+C-1)_k}{k!\,(A)_k(C-B)_k} \notag
\frac{(A)_{m+k}(B)_m(C-B)_k}{(C)_{m+k}}\notag\\
&= \frac{(A)_{m}(B)_m}{(C)_m}{}_3F_2\left(\begin{array}{c}
-n, n+C-1, A+m\cr
A, C+m 
\end{array};1\right)\notag\\
&= \frac{(A)_{m}(B)_m}{(C)_m}\frac{(-m)_n(A-n-C+1)_n}{(A)_n(-m-n-C+1)_n}\notag\\
&= \frac{(A)_{m}(B)_m(C-A)_n(-m)_n}{(C)_{m+n}(A)_n}, \label{e-L3}
\end{align}
where we have used the Pfaff-Saalsch\"utz theorem to evaluate the $_3F_2$.
So if $0\le m<n$ this is~0. 
Since $R_n(x; A,B,C)$ has leading coefficient $(C-1)_{2n}/(A)_n(C-B)_n (C-1)_n$, the case $m=n$ of \eqref{e-L3} gives
\eqref{e-RR}.
\end{proof}

\begin{prop}
\label{p-Mgf}
Let $M_n(A,B,C)= L(x^n)$. Then 
\begin{equation}\label{Mgf}
\sum_{n=0}^\infty M_n(A,B,C)t^n=
\sum_{n=0}^\infty\frac{(A)_n(B)_n}{(C)_n}\frac{t^n}{\prod_{l=0}^n(1+(A+l)t)},
\end{equation}
and 
\begin{equation}\label{Megf}
\sum_{n=0}^\infty M_n(A,B,C)\frac{t^n}{n!}=e^{-At}{}_2F_1
\left(\begin{array}{c}
A, \, B\cr
C
\end{array}; 1-e^{-t}\right).
\end{equation}
\end{prop}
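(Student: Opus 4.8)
The plan is to derive \eqref{Mgf} directly from Proposition~\ref{p-moments} and then obtain \eqref{Megf} by applying the operator $\epsi$ term by term. For the first formula I would specialize Proposition~\ref{p-moments} to the arithmetic progression $a_l = A+l$. With this choice the product collapses into a rising factorial, $(x+a_0)(x+a_1)\cdots(x+a_{n-1}) = (x+A)_n$, so that $v_n = L\bigl((x+A)_n\bigr)$, which is exactly $(A)_n(B)_n/(C)_n$ by the defining relation~\eqref{e-HahnL}. Since $(1+a_0t)\cdots(1+a_nt) = \prod_{l=0}^n(1+(A+l)t)$, Proposition~\ref{p-moments} yields \eqref{Mgf} with no further computation.

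For the exponential generating function I would apply $\epsi$ to both sides of \eqref{Mgf}, using that $\epsi$ acts coefficientwise and so sends $\sum_n M_n t^n$ to $\sum_n M_n t^n/n!$. By linearity it then suffices to compute $\epsi$ of each summand $t^n/\prod_{l=0}^n(1+(A+l)t)$, and this is precisely the shape handled by Lemma~\ref{l-egf}. Writing $1+(A+l)t = 1-(-(A+l))t$, the denominator factors carry the values $\{-A,-A-1,\dots,-A-n\}$, which coincide as a set, hence as a product, with $\{\alpha,\alpha+1,\dots,\alpha+n\}$ for the choice $\alpha=-A-n$ and $m=n$. Lemma~\ref{l-egf} then gives
\begin{equation*}
\epsi\left(\frac{t^n}{\prod_{l=0}^n(1+(A+l)t)}\right)=e^{(-A-n)t}\,\frac{(e^t-1)^n}{n!}.
\end{equation*}

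The last step is the simplification $e^{(-A-n)t}(e^t-1)^n = e^{-At}\bigl(e^{-t}(e^t-1)\bigr)^n = e^{-At}(1-e^{-t})^n$, which produces the factor $1-e^{-t}$ that becomes the argument of the hypergeometric series. Summing and factoring out $e^{-At}$,
\begin{equation*}
\sum_{n=0}^\infty M_n(A,B,C)\frac{t^n}{n!}=e^{-At}\sum_{n=0}^\infty\frac{(A)_n(B)_n}{(C)_n}\frac{(1-e^{-t})^n}{n!}=e^{-At}\,{}_2F_1\!\left(\begin{array}{c}A, B\\ C\end{array};1-e^{-t}\right),
\end{equation*}
which is \eqref{Megf}. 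I expect the only point requiring care to be the index bookkeeping in applying Lemma~\ref{l-egf}: one must match the decreasing values $-A-l$ appearing in the denominator of \eqref{Mgf} to the increasing values $\alpha+k$ of the lemma via the correct choice $\alpha=-A-n$, after which the single identity $e^{-nt}(e^t-1)^n=(1-e^{-t})^n$ delivers the clean closed form. Everything else is an immediate consequence of results already established.
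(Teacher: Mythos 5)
Your proposal is correct and follows essentially the same route as the paper: equation \eqref{Mgf} is obtained by specializing Proposition~\ref{p-moments} with $a_l=A+l$ and \eqref{e-HahnL}, and \eqref{Megf} by applying $\epsi$ termwise via Lemma~\ref{l-egf} (with $\alpha=-A-n$, $m=n$) together with the simplification $e^{-(A+n)t}(e^t-1)^n=e^{-At}(1-e^{-t})^n$. The only difference is that you spell out the index bookkeeping that the paper leaves implicit.
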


\begin{proof}
Equation \eqref{Mgf} follows from 
 Proposition \ref{p-moments} and equation \eqref{e-HahnL}.
By Lemma \ref{l-egf} we have 
\begin{equation*}
\varepsilon\left(\frac{t^n}{\prod_{l=0}^n(1+(A+l)t)}\right) = e^{-(A+n)t}\frac{(e^t-1)^n}{n!}
  =e^{-At}\frac{(1-e^{-t})^n}{n!}
\end{equation*}
so applying $\varepsilon$ to \eqref{Mgf} gives \eqref{Megf}.
\end{proof}

A formula equivalent to \eqref{Megf} has been given by Dominici \cite[Section 3.7]{Dom}.

\begin{thm}\label{thm-S} The following S-fraction holds:
\begin{multline}
\sum_{n=0}^\infty M_n(A,B,C)t^n= S\left(t; \frac{A(B-C)}{C},\; \frac{B(C-A)}{C(C+1)},\right.\\ 
\frac{(A+1)(B-C-1)C}{(C+1)(C+2)},\; \frac{2(B+1)(C-A+1)}{(C+2)(C+3)},\\ 
\left. \frac{(A+2)(B-C-2)(C+1)}{(C+3)(C+4)},\;
\frac{3(B+2)(C-A+2)}{(C+4)(C+5)},\ldots\right).
\end{multline}
\end{thm}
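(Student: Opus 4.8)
The plan is to deduce the S-fraction from the J-fraction of the moment generating function by means of the contraction formula \eqref{contraction1}. Since $M_0 = L(1) = 1$ and the polynomials $R_n(x;A,B,C)$ are orthogonal with respect to $L$, the standard correspondence between three-term recurrences and J-fractions (already used for \eqref{W-JCF}) gives
$$
\sum_{n=0}^\infty M_n(A,B,C)\,t^n = J(t; b_0, \lambda_1, b_1, \lambda_2, \dots),
$$
where $b_n$ and $\lambda_n$ are the recurrence coefficients of the monic polynomials $\hat p_n = R_n/k_n$, with $k_n = (C-1)_{2n}/\bigl((A)_n(C-B)_n(C-1)_n\bigr)$ the leading coefficient of $R_n$ recorded in the proof of \eqref{e-RR}. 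Writing the claimed S-fraction coefficients as $\alpha_{2n+1} = \hat A_n$ and $\alpha_{2n} = \hat C_n$, so that
$$
\hat A_n = \frac{(A+n)(B-C-n)(C+n-1)}{(C+2n-1)(C+2n)}, \qquad \hat C_n = \frac{n\,(B+n-1)(C-A+n-1)}{(C+2n-2)(C+2n-1)},
$$
the contraction relations $\gamma_0 = \alpha_1$, $\gamma_n = \alpha_{2n}+\alpha_{2n+1}$, $\beta_n = \alpha_{2n-1}\alpha_{2n}$ reduce the theorem to the two families of identities
$$
b_n = \hat A_n + \hat C_n\ \ (n\ge0,\ \hat C_0:=0), \qquad \lambda_n = \hat A_{n-1}\,\hat C_n\ \ (n\ge1).
$$

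First I would verify $\lambda_n = \hat A_{n-1}\hat C_n$. From $L(\hat p_n^2) = L(R_n^2)/k_n^2$ together with \eqref{e-RR} and the explicit $k_n$, the ratio $\lambda_n = L(\hat p_n^2)/L(\hat p_{n-1}^2)$ telescopes into a product of linear factors; rewriting $B-C-n+1 = -(C-B+n-1)$ then matches it term by term with $\hat A_{n-1}\hat C_n$. This half is a short cancellation and poses no difficulty.

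Next I would compute $b_n$ exactly as in the Wilson case around \eqref{coeff:bn}. Reading off the ${}_3F_2$ that defines $R_n$, the two top coefficients $[x^n]R_n = k_n$ and $[x^{n-1}]R_n$ are finite sums that evaluate in closed form; their quotient is the subleading coefficient $c_n$ of $\hat p_n$, and then $b_n = c_n - c_{n+1}$. This gives $b_n$ as an explicit rational function of $A,B,C$ and $n$.

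The main obstacle will be the remaining identity $b_n = \hat A_n + \hat C_n$: unlike the $\lambda_n$ relation it does not factor, so one must put the right-hand side over the common denominator $(C+2n-2)(C+2n-1)(C+2n)$ and check the equality of two polynomials in $n$ with parameters $A,B,C$. The cases $n=0$ (giving $b_0 = \hat A_0 = A(B-C)/C = \alpha_1$) and $n=1$ serve as quick checks, while the general case is a routine but unenlightening verification. Once both families hold, \eqref{contraction1} shows that the claimed S-fraction contracts to $J(t; b_0, \lambda_1, b_1, \lambda_2, \dots)$, which is $\sum_n M_n(A,B,C)\,t^n$; this completes the proof and exactly parallels the passage from \eqref{W-JCF} to \eqref{w:a=0}, with $\hat A_n,\hat C_n$ in the roles of $A_n,C_n$.
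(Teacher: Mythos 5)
Your proposal is correct, but it follows a genuinely different route from the paper. You work from the orthogonality side: you take the three-term recurrence coefficients $b_n,\lambda_n$ of the monic polynomials proportional to $R_n(x;A,B,C)$, obtaining $\lambda_n$ from the ratio of the norms \eqref{e-RR} (divided by the squared leading coefficients) and $b_n$ from the two top coefficients of the ${}_3F_2$, exactly as the paper does for Wilson polynomials via \eqref{coeff:bn}; you then check that the claimed $\alpha_k$ contract under \eqref{contraction1} to this J-fraction. Your indexing of $\hat A_n$ and $\hat C_n$ matches the theorem, and the identity $\lambda_n=\hat A_{n-1}\hat C_n$ does telescope as you say (using $B-C-n+1=-(C-B+n-1)$); the remaining identity $b_n=\hat A_n+\hat C_n$ is, as you anticipate, a routine polynomial verification. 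The paper instead never touches the recurrence coefficients: it writes $F(t;A,B,C)=\frac{1}{1+At}G\bigl(\frac{t}{1+At}\bigr)$ with $G$ given by \eqref{def:G}, derives a functional equation $G(t;A,B,C)=1+\frac{AB}{C}\frac{t}{1+t}G\bigl(\frac{t}{1+t};A+1,B+1,C+1\bigr)$, and plays the two contraction formulas \eqref{contraction1} and \eqref{contraction2} against each other to pin down the S-fraction coefficients of $G$ recursively, finally transferring to $F$. The paper's method determines the coefficients from a self-similar structure of the series (and generalizes to situations where no explicit orthogonality data is at hand), whereas yours leans on the explicit norm formula and leading coefficients already established; the trade-off is that your $b_n$ computation is a somewhat heavier direct verification, while the paper's argument requires guessing and confirming the closed form \eqref{Cn} from a coupled recursive system. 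Both are complete proofs.
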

\begin{proof}  Let $F(t; A, B, C)=\sum_{n=0}^\infty M_n(A,B,C) t^n$.
By \eqref{Mgf} we have 
\begin{subequations}
\begin{equation}\label{link:F and G}
F(t; A, B, C)=
\frac{1}{1+At} G\biggl(\frac{t}{1+At}; A,B,C\biggr)
\end{equation}
with
\begin{equation}\label{def:G}
G(t; A,B,C)=\sum_{n=0}^\infty\frac{(A)_n(B)_n}{(C)_n}\frac{t^n}{\prod_{l=0}^n(1+l\,t)}.
\end{equation}
\end{subequations}
We first expand $G(t; A,B,C)$ as an S-fraction
$$
G(t; A,B,C)=S(t; C_1, C_2, \ldots, C_n,\ldots),
$$
which can be written as a J-fraction 
\begin{equation}\label{J1}
G(t; A,B,C)= J(t; C_1, C_1C_2, C_2+C_3, C_3C_4, \ldots),
\end{equation}
where $C_n:=C_n(A,B,C)$ ($n\geq 1$) are to be determined. 

Rewriting \eqref{def:G} as
$$
G(t; A,B,C)=1+\frac{AB}{C}\cdot \frac{t}{1+t}\cdot G\biggl(\frac{t}{1+t}; A+1,B+1,C+1\biggr)
$$
and using the J-fraction \eqref{J1} to write
\begin{multline}
 \frac{1}{1+t}\cdot G\biggl(\frac{t}{1+t}; 
 A+1,B+1,C+1\biggr)\\
 = 
 J(t; C^+_1-1, C^+_1\,C^+_2, C^+_2+C^+_3-1, C^+_3\,C^+_4, \ldots)\nonumber
\end{multline}
 with $C^+_n=C_n(A+1, B+1, C+1)$, we obtain 
\begin{equation}\label{G-equation}
G(t; A,B,C)=1+\frac{AB}{C}t\cdot J(t; C^+_1-1, C^+_1\,C^+_2, C^+_2+C^+_3-1, C^+_3\,C^+_4, \ldots).
\end{equation}
On the other hand, 
contracting the S-fraction by
\eqref{contraction2}  we have 
\begin{equation}\label{J2}
G(t; A,B,C)=1+C_1t\cdot J(t; C_1+C_2, C_2C_3, C_3+C_4, C_4C_5, \ldots).
\end{equation}
Comparing the above two J-fractions 
we derive
\begin{align*}
 C_1=\frac{AB}{C},\quad C_1+C_2=C^+_1-1,\quad
 C_2C_3=C^+_1\,C^+_2,
 \end{align*}
 and  for $n\geq 2$ 
\begin{align*}
 C_{2n-1}+C_{2n}&=C^+_{2n-2}+C^+_{2n-3}-1,\\
 C_{2n}\,C_{2n+1}&=C^+_{2n-1}\,C^+_{2n+2}.
 \end{align*}
This yields immediately  
\begin{equation}\label{Cn}
\begin{cases}\displaystyle
C_{2n-1}=\frac{(A+n-1)(B+n-1)(C+n-2)}{(C+2n-3)(C+2n-2)};\\
\displaystyle
C_{2n}=\frac{n(B-C-n+1)(A-C-n+1)}{(C+2n-2)(C+2n-1)}.
\end{cases}
\end{equation}
Next,  by \eqref{link:F and G} and \eqref{J1} we have the J-fraction for $F(t;A,B,C)$:
\begin{subequations}
\begin{equation}\label{JF2}
F(t; A, B, C)=J(t; C_1-A, C_1C_2, C_2+C_3-A, C_3C_4, \ldots).
\end{equation}
It remains to determine  a sequence $\alpha_n$  such that 
\begin{align}
F(t; A, B, C)&=S(t; \alpha_1, \alpha_2, \ldots, \alpha_n,\ldots)\label{SF}\\
&=J(t; \alpha_1, \alpha_1\alpha_2, \alpha_2+\alpha_3, \alpha_3\alpha_4, \ldots). \label{JF1}
\end{align}
\end{subequations}
From \eqref{Cn}, \eqref{JF1} and \eqref{JF2} we derive 
\begin{align*}
\alpha_1&=C_1-A=\frac{A(B-C)}{C}\\
\alpha_2&=\frac{C_1C_2}{\alpha_1}=\frac{B(C-A)}{C(C+1)}.
\end{align*}
and for $n\geq 2$,
\begin{align*}
\alpha_{2n-1}&=C_{2n-1}+C_{2n-2}-A\\
&=\frac{(A+n)(B-C-n)(C+n-1)}{(C+n)(C+n+1)},\\
\alpha_{2n}&=\frac{C_{2n}C_{2n-1}}{\alpha_{2n-1}}\\
&=
\frac{n (B+n-1)(C-A+n-1)}{(C+2n-1)(C+2n-2)}.
\end{align*}
Plugging these values in  \eqref{SF} yields  the S-fraction in Theorem~\ref{thm-S}. 
\end{proof}
%

As an example of Proposition \ref{p-Mgf} let us consider the case $A=1,B=1, C=2$.
We have $_2F_1(1,1;2;z) = -z^{-1}\log(1-z)$, so 
\begin{equation*}
e^{-t}{}_2F_1
\left(\begin{array}{c}
1, \,1\cr
2
\end{array}; 1-e^{-t}\right)=\frac{t}{e^t-1}=\sum_{n=0}^\infty B_n \frac{t^n}{n!},
\end{equation*}
where $B_n$ is the $n$th  Bernoulli number. So $M_n(1,1,2)=B_n$.  

The orthogonal  polynomials whose moments are Bernoulli numbers were  considered by Touchard \cite{To}, but he was not able to find an explicit formula for them. An explicit formula (different from ours, but equivalent) was found by Wyman and Moser \cite{WM} and these polynomials were further studied by Carlitz \cite{Ca3}. 
One can show similarly that $M_n(1,2,3)=-2B_{n+1}$ and $M_n(1,1,3)=2(B_n+B_{n+1})$.
Krattenthaler  \cite[Section 2.7]{Kr} proved a result equivalent to 
\begin{equation*}
\frac{e^t}{6}\sum_{n=0}^\infty M_n(2,2,4)\frac{t^n}{n!} = \sum_{n=0}^\infty B_{n+2}\frac{t^n}{n!}.
\end{equation*}
Fulmek and Krattenthaler \cite[equation (5.14)]{FK} expressed the moments of general continuous Hahn polynomials (with some integrality and nonnegativity restrictions on the parameters) in terms of Bernoulli numbers, generalizing all of these formulas. 

Chapoton \cite{Cha} studied some special cases of Racah polynomials whose moments are 
the \emph{median Bernoulli numbers} since they are, up to an easy power of 2, 
the main diagonal of Seidel's difference tableau of Bernoulli numbers \cite[p.~181]{Sei}.

For the moments $\mu_n$ of the original Hahn polynomials \[Q_n(x;\alpha, \beta, N) = R_n(x; \alpha+1, \alpha+\beta+2, \alpha+\beta+N+2),\] equation \eqref{Megf} gives
\begin{equation*}
\sum_{n=0}^\infty \mu_n \frac{t^n}{n!} = 
e^{-(\alpha+1)t}{}_2F_1
\left(\begin{array}{c}
\alpha+1, \, \alpha+\beta+N+2\cr
\alpha+\beta+2
\end{array}; 1-e^{-t}\right).
\end{equation*}
Applying Pfaff's transformation gives
\begin{equation*}
\sum_{n=0}^\infty \mu_n \frac{t^n}{n!} = 
{}_2F_1
\left(\begin{array}{c}
\alpha+1, \, -N\cr
\alpha+\beta+2
\end{array}; 1-e^{t}\right).
\end{equation*}
If $N$ is a nonnegative integer, we may apply the terminating $_2F_1$ transformation
\begin{equation*}
{}_2F_1
\left(\begin{array}{c}
a, \, -N\cr
b
\end{array};z\right)
=\frac{(b-a)_N}{(b)_N}
{}_2F_1
\left(\begin{array}{c}
a, \, -N\cr
1+a-b-N
\end{array};1-z\right)
\end{equation*}
to obtain
\begin{equation*}
\sum_{n=0}^\infty \mu_n \frac{t^n}{n!} = 
\frac{(\beta+1)_N}{(\alpha+\beta+2)_N}
{}_2F_1
\left(\begin{array}{c}
\alpha+1, \, -N\cr
-\beta-N
\end{array};e^t\right).
\end{equation*}
Equating coefficients of $x^n/n!$ gives
\begin{align*}
\mu_n &= \frac{(\beta+1)_N}{(\alpha+\beta+2)_N}\sum_{k=0}^N\frac{(\alpha+1)_k (-N)_k}{k!\,(-\beta-N)_k}k^n\\
  &= \frac{N!}{(\alpha+\beta+2)_N}\sum_{k=0}^n \binom{\alpha+k}{k}\binom{\beta+N-k}{N-k}k^n,
\end{align*}
so we see that, as expected, in this case the linear functional $L$ is a constant multiple of the linear functional $L_0$ defined by 
\eqref{e-L0}.

\section{Concluding remarks}
\label{s-concluding}
In what follows the standard  $q$-notations (see \cite{AAR, Is,KLS}) will be used.
The Askey-Wilson polynomials are defined by 
$$
p_n(x;a,b,c,d\,|\, q):=(ab,ac,ad;q)_{n} a^{-n}A_{n}(x)\qquad (n\in \N)
$$ 
with
\begin{align}
A_{n}(x)=
{}_{4}\phi_{3}\left[\!\!\! \begin{array}{c}
q^{-n},\, abcdq^{n-1},\,ae^{i\theta},\, ae^{-i\theta}\\
ab,\, ac,\, ad\end{array}\!\!\!;q,q\right],
\end{align}
where $x=\cos\theta$, see \cite{AW, Is, KLS}.

In the last decade much work has been done to extend Viennot's results for 
moments of classical orthogonal polynomials 
to   the moments of the Askey-Wilson polynomials; see \cite{CKS, CSSW, KS, GITZ}. It would be interesting to see to what extent the methods of this paper  can be $q$-generalized.
We show that 
 Proposition~\ref{p-moments} works also  for the moments of the Askey-Wilson polynomials. 

For $0<q<1$, $\max\{|a|, |b|, |c|, |d|\}<1$,  $z=e^{i\theta}$ and $x=\cos\theta$,
the linear functional  $\L_q: \C[x]\mapsto \C$  associated to the orthogonal  measure of  the Askey-Wilson polynomials
has the explicit integral form \cite{AW}:
\begin{multline}
\L_q\bigl(x^n\bigr)=\frac{1}{2\pi}
\frac{(ab,ac, ad, bc, bd, cd;q)_\infty}{(abcd;q)_\infty}\\
\times \int_0^\pi 
 \frac{{(\cos \theta)^n}\;(e^{2 i\theta}, e^{-2 i\theta};q)_\infty \;d\theta}
 {(ae^{i\theta}, ae^{-i\theta},be^{i\theta}, be^{-i\theta},ce^{i\theta}, ce^{-i\theta},de^{i\theta}, de^{-i\theta};q)_\infty}.\qquad\nonumber
\end{multline} 
Thus, the Askey-Wilson integral reads 
 $\L_q(1)=1$. As
 $(az,a/z;q)_n=(ae^{i\theta}, ae^{-i\theta}; q)_n$, the value 
 $\L_q\bigl((az,a/z;q)_n\bigr)$ amounts to shifting $a$ to $aq^{n}$ in the integral, so
 \begin{align}\label{linfunc}
\L_q\bigl((az,a/z;q)_n\bigr)
&=\frac{(ab,ac,ad;q)_n}{(abcd;q)_n}.
\end{align}
The same argument 
yields  the $q$-version of \eqref{mnMoments}:
 \begin{align}\label{mnMomentsAW}
\L_q\bigl((az,a/z;q)_n(bz,b/z;q)_m\bigr)
&=\frac{(ab,q)_{m+n} (ac,ad;q)_n(bc, bd:q)_m}{(abcd;q)_{m+n}}.
\end{align}

As   $x=(z+1/z)/2$, choose 
$a_j=(q^{-j}/a+aq^j)/2$ for $j\in \N$ and $t\in \C$. Then  
$$
(x-a_0)\cdots (x-a_{n-1})=(-1)^n(2a)^{-n}q^{-{n\choose 2}}(az, a/z;q)_n.
$$
Applying  Proposition~\ref{p-moments} to \eqref{linfunc}
we derive  the generating function 
of the moments of Askey-Wilson polynomials. 
\begin{thm}\label{AWmoment} We have 
\begin{equation}\label{eq:AWmoment}
\sum_{n=0}^\infty \L_q\bigl(x^n\bigr) u^n=
\sum_{n=0}^\infty \frac{(ab,ac,ad;q)_n}{(abcd;q)_n}
\frac{(-1)^n (2a)^{-n}q^{-{n\choose 2}} u^n}{(1-a_0 u)\ldots (1-a_{n}u)}.
\end{equation}
\end{thm}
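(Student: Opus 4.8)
The plan is to obtain \eqref{eq:AWmoment} as a direct specialization of Proposition~\ref{p-moments} applied to the linear functional $\L_q$, feeding in the explicit factorization recorded just above the statement together with the closed form \eqref{linfunc} for the basic moments. Since Proposition~\ref{p-moments} is phrased for products $(x+a_0)\cdots(x+a_{n-1})$ with denominators $\prod_l(1+a_l t)$, whereas our quantities involve $(x-a_0)\cdots(x-a_{n-1})$, the first thing I would do is replace each $a_j$ by $-a_j$ in the proposition. This is legitimate because the $a_j$ there are completely arbitrary, and it is precisely the kind of substitution flagged in Remark~\ref{rem5}. After this sign change, and writing $t=u$, the proposition gives
\[
\sum_{n=0}^\infty \L_q(x^n)\,u^n=\sum_{n=0}^\infty v_n\,\frac{u^n}{(1-a_0u)\cdots(1-a_nu)},\qquad v_n=\L_q\bigl((x-a_0)\cdots(x-a_{n-1})\bigr).
\]

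The next step is to evaluate $v_n$. The stated factorization $(x-a_0)\cdots(x-a_{n-1})=(-1)^n(2a)^{-n}q^{-{n\choose 2}}(az,a/z;q)_n$ exhibits this product as an $n$-dependent scalar (involving only $a$, $q$ and $n$, and in particular not $x$) times the polynomial $(az,a/z;q)_n$ in $x$. Pulling the scalar out of $\L_q$ by linearity and invoking \eqref{linfunc} yields
\[
v_n=(-1)^n(2a)^{-n}q^{-{n\choose 2}}\,\L_q\bigl((az,a/z;q)_n\bigr)=(-1)^n(2a)^{-n}q^{-{n\choose 2}}\frac{(ab,ac,ad;q)_n}{(abcd;q)_n}.
\]
Substituting this expression for $v_n$ into the generating-function identity above reproduces \eqref{eq:AWmoment} verbatim, completing the argument.

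There is no genuine obstacle here; the content is essentially that the combinatorial identity of Proposition~\ref{p-moments} is robust enough to transfer unchanged to the $q$-setting once the moments $\L_q\bigl((az,a/z;q)_n\bigr)$ are known in closed form. The only two points deserving care are purely bookkeeping. First, one must track the sign conventions so that the $+a_j$ of Proposition~\ref{p-moments} is correctly converted to the $-a_j$ appearing in the Askey--Wilson choice $a_j=(q^{-j}/a+aq^j)/2$. Second, one should confirm that $(az,a/z;q)_n$ really is a polynomial in $x=(z+1/z)/2$, so that $\L_q$ may be applied to it; this holds because each factor $(1-azq^j)(1-aq^j/z)=1-2aq^jx+a^2q^{2j}$ is symmetric under $z\leftrightarrow 1/z$, hence a polynomial in $x$. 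Both checks are routine, so the proof is short and self-contained.
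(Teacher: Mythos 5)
Your proposal is correct and follows essentially the same route as the paper: the paper's own proof is exactly the application of Proposition~\ref{p-moments} with $a_j=(q^{-j}/a+aq^j)/2$, using the factorization $(x-a_0)\cdots(x-a_{n-1})=(-1)^n(2a)^{-n}q^{-\binom{n}{2}}(az,a/z;q)_n$ together with \eqref{linfunc}. Your additional bookkeeping (the sign change $a_j\mapsto -a_j$ in the proposition and the check that $(az,a/z;q)_n$ is a polynomial in $x$) merely makes explicit what the paper leaves implicit.
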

It is interesting to note that
computing the coefficent of $u^n$  in the right side of \eqref{eq:AWmoment}
by partial fraction decomposition  or using Newton's interpolation formula \cite{GITZ} yields
the $t=0$ case of the double sum formula for the Askey-Wilson moments in \cite[Theorem 1.13]{CSSW}. The  general case follows by applying the shifted version of 
Proposition~\ref{p-moments}, see Remark~\ref{rem5}.
Finally, we note that 
 an algebraic proof of the orthogonality relation of the Askey-Wilson polynomials was given by Gasper and Rahman  in \cite[p.~190--191]{GR} using \eqref{mnMomentsAW}.



\begin{thebibliography}{99}
\bibitem{As1} \textsc{Askey} (R.).
\emph{Beta integrals and the associated orthogonal polynomials}.
In: Number Theory (ed. K. Alladi). Lecture Notes in Mathematics {\bf 1395},
Springer-Verlag, New York, 1989, 84--121.

\bibitem{AAR}
\textsc{Andrews} (G. E.);\textsc{ Askey} (R.); \textsc{Roy} (R.). \emph{Special functions. Encyclopedia of Mathematics and its Applications}, {\bf 71}.  Cambridge University Press, Cambridge, 1999. xvi+664 pp.

\bibitem{AW} \textsc{Askey} (R.) and \textsc{Wilson} (J.).
\emph{Some basic hypergeometric orthogonal polynomials that generalize Jacobi polynomials}, Memoirs of AMS. {\bf 54} (1985).

\bibitem{Bi}
\textsc{Bigeni} (A.).
\emph{A generalization of the Kreweras triangle through the universal {$\rm sl_2$}  weight system}, J. Comb. Theory Ser. {\bf A 161} (2019), 309--326.

\bibitem{Ca3}
\textsc{Carlitz} (L.).
\emph{Some polynomials of Touchard connected with the Bernoulli numbers},
Canadian J. Math. {\bf 9} (1957), 188--190.

\bibitem{Ca1} \textsc{Carlitz} (L.).
\emph{A conjecture concerning Genocchi numbers},
Koninkl. norske Vidensk. Selsk. Sk. {\bf 9} (1972), 1--4.

\bibitem{Ca2} \textsc{Carlitz} (L.).
\emph{Explicit formulas for the Dumont-Foata polynomials},
Discrete Math. {\bf 30} (1980), 211--225.

\bibitem{Ch} \textsc{Chihara} (T.S.).
\emph{An introduction to orthogonal polynomials},
Gordon and Breach, New-York/London/Paris, 1978.


\bibitem{Cha}
\textsc{Chapoton} (F.). 
\emph{Ramanujan-Bernoulli numbers as moments of Racah polynomials},
J. Th\'eor. Nombres Bordeaux {\bf 32} (2020), no. 1, 205--215.

\bibitem{CKS}
\textsc{Corteel} (S.),
\textsc{Kim} (J.S.), \textsc{Stanton} (D.).
\emph{Moments of orthogonal polynomials and combinatorics}, in: Recent Trends in Combinatorics, in: The IMA Volumes in Mathematics and Its Applications, vol. 159, 2016.


\bibitem{CSSW}
\textsc{Corteel} (S.),   \textsc{Stanley} (R.),  \textsc{Stanton} (D.),  \textsc{Williams} (L.). 
 \emph{Formulae for Askey-Wilson moments and enumeration of staircase tableaux.}
Trans. Amer. Math. Soc. {\bf 364} (2012), no. 11, 6009--6037.

\bibitem{DB} \textsc{De Branges} (L.).
\emph{Gauss spaces of entire functions},
J. Math. Anal. Appl. {\bf 37} (1972), 1--41.

\bibitem{DF} 
\textsc{Dumont} (D.) and \textsc{Foata} (D.).
\emph{Une propri\'et\'e de sym\'etrie des nombres de Genocchi},
Bull. Soc. Math. France {\bf 104} (1976), no. 4, 433--451.



\bibitem{Dom}
\textsc{Dominici} (D.). 
\emph{Polynomial sequences associated with the moments of hypergeometric weights},
SIGMA Symmetry Integrability Geom. Methods Appl. 12 (2016), 
Paper No. 044, 18 pp.

%
\bibitem{Du} \textsc{Dumont} (D.).
\emph{Conjectures sur des sym\'etries ternaires
li\'ees aux nombres de Genocchi},
Discrete Math. {\bf 139} (1995), 469--472.

\bibitem{DZ} \textsc{Dumont} (D.) and \textsc{Zeng} (J.).
\emph{Further results on the Euler and Genocchi numbers},
Aequationes Math. {\bf 47} (1994), 31--42.

\bibitem{Er} \textsc{Erd\'ely et al.} (A.).
\emph{Higher transcendental functions}, Vol. 1 of Bateman Manuscript Project.
McGraw-Hill Book Company, New-York, 1953.


\bibitem{Fe} \textsc{Feigin} (E.).
\emph{The median Genocchi numbers, $q$-analogues and continued fractions},
Europ. J. Combin. {\bf 33} (2012), 1913--1918.


\bibitem{Gan} \textsc{Gandhi} (J. M.).
\emph{A conjectured representation of Genocchi numbers},
Amer. Math. Monthly  {\bf 77} (1970), 505--506.


\bibitem{Ge}  Gessel (I. M.). Applications of the classical umbral calculus. 
Algebra Universalis 49 (2003), no. 4, 397--434.

\bibitem{He}
\textsc{Hetyei} (G.).
\emph{Alternation acyclic tournaments}. 
European J. Combin. {\bf 81} (2019), 1--21.

\bibitem{Is}
\textsc{Ismail} (M.). Classical and quantum orthogonal polynomials in one variable. With two chapters by Walter Van Assche. 
With a foreword by Richard A. Askey. Encyclopedia of Mathematics 
and its Applications, 98. Cambridge University Press, Cambridge, 2005.


\bibitem{JV} 
\textsc{Josuat-Verg\`es} (M.).
 \emph{Generalized Dumont–Foata polynomials and alternative tableaux}. 
 S\'em. Lothar. Combin. 64 (2011) 17 pp. Art. B64b.
 
 
 \bibitem{KS}
 \textsc{Kim} (J.S.),  \textsc{Stanton} (D.). \emph{ Moments of Askey-Wilson polynomials}. J. Combin. Theory Ser. {\bf A 125} (2014), 113--145. 
 
\bibitem{Kn}
\textsc{Knuth} (D. E.).
\emph{Two notes on notation},
Amer. Math. Monthly  {\bf 99} (1992), no. 5, 403--422.


\bibitem{KLS}
\textsc{Koekoek} (R.), \textsc{Lesky} (P.)  and \textsc{Swarttouw} (R.).\emph{ Hypergeometric orthogonal polynomials and their q-analogues}. With a foreword by Tom H. Koornwinder. Springer Monographs in Mathematics. Springer-Verlag, Berlin, 2010. xx+578 pp.

\bibitem{Kr}
\textsc{Krattenthaler} (C.).
\emph{Advanced determinant calculus},
S\'em. Lothar. Combin. {\bf 42} (1999), Art. B42q, 67 pp.

\bibitem{FK}
\textsc{Fulmek} (M.) and  \textsc{Krattenthaler} (C.). 
\emph{The number of rhombus tilings of a symmetric hexagon which contain a fixed rhombus on the symmetry axis. II}, 
European J. Combin. {\bf 21} (2000), 601--640.

\bibitem{GR}
\textsc{Gasper} (G.) and \textsc{Rahman} (M.).  \emph{Basic hypergeometric series}. With a foreword by Richard Askey. Second edition. Encyclopedia of Mathematics and its Applications, 96. Cambridge University Press, Cambridge, 2004.

\bibitem{GITZ}
\textsc{Guo}  (V. J. W.),
\textsc{Ishikawa} (M.), \textsc{Tagawa} (T.),  
\textsc{Zeng} (J.),
\emph{A quadratic formula for basic hypergeometric series related to Askey-Wilson polynomials}. 
Proc. Amer. Math. Soc. {\bf 143} (2015), no. 5, 2003--2015.

\bibitem{LW}
\textsc{Lazar} (A.) and  \textsc{Wachs} (M. L.).
\emph{On the homogenized Linial arrangement: intersection lattice and Genocchi numbers}, 
S\'em. Lothar. Combin. 82B (2020), Art. 93, 12 pp.

\bibitem{MT} \textsc{Milne-Thomson} (L. M.).
\emph{ The Calculus of Finite Differences}.
 Macmillan and Co., Ltd., London, 1951. 

\bibitem{Ni}
\textsc{Nicole} (F.), \emph{M\'ethode pour sommer une infinit\'e de Suites nouvelles, dont on ne peut trouver les Sommes par les M\'ethodes connu\"es}, M\'emoires de l'Academie Royale des Sciences, 1727, 257--268.

\bibitem{NKF}
\textsc{Njionou Sadjang} (P.), \textsc{Koepf} (W.), \textsc{Foupouagnigni} (M.). 
\emph{On moments of classical orthogonal polynomials}, 
J. Math. Anal. Appl. {\bf 424} (2015), no. 1, 122--151.


\bibitem{VDP}
\textsc{van der Poorten} (A. J.).
\emph{A proof that Euler missed \dots Ap\'ery's proof of the irrationality of $\zeta(3)$},
Math. Intelligencer {\bf 1} (1978/79), no. 4, 
195--203.

\bibitem{Ra} \textsc{Randrianarivony} (A.).
\emph{Sur une extension des polyn\^omes de Dumont-Foata},
S\'em. Lothar. Combin.  32 (1994), Art. B32d, approx. 12 pp.



\bibitem{RS} \textsc{Riordan} (J.) and \textsc{Stein} (P.).
\emph{Proof of a conjecture on Genocchi numbers},
Discrete Math. {\bf 5} (1973), 381--388.


\bibitem{Sei}
\textsc{Seidel} (L.).  \emph{\"Uber eine einfache entstehungsweise der Bernoullischen zahlen und einiger verwandten reihen}, Sitzungsber. M\"unch. Akad. 4 (1877), 157--187, http://publikationen.badw.de/de/003384831.
\bibitem{To}
\textsc{Touchard} (J.),
\emph{Nombres exponentiels et nombres de Bernoulli},
Canad. J. Math. {\bf 8} (1956 ), 305--320.




\bibitem{Vi81}
\textsc{Viennot} (G.).
\emph{Une th\'eorie combinatoire des nombres d'Euler et Genocchi},
S\'eminaire de Th\'eorie des nombres de l'Universit\'e Bordeaux,
Expos\'e no. 11, 1980--1981,
Publications de l'Universit\'e Bordeaux I.

\bibitem{Vi83}
\textsc{Viennot} (G.).
\emph{Une th\'eorie combinatoire des polyn\^omes orthogonaux g\'en\'eraux}, Lecture Notes, Universit\'e du Qu\'ebec \`a Montr\'eal, 1983.

\bibitem{Wi}
\textsc{Wilson} (J.).
\emph{Some hypergeometric orthogonal polynomials},
SIAM J. Anal. {\bf 11} (1980), 690--701.

\bibitem{WM}
\textsc{Wyman} (M.) and \textsc{Moser} (L.).
\emph{On some polynomials of Touchard},
Canad. J. Math. {\bf 8} (1956), 321--322.

\bibitem{Ze92}
 \textsc{Zeng} (J.). Weighted derangements and the linearization coefficients of orthogonal Sheffer polynomials. Proc. London Math. Soc. (3) 65 (1992), no. 1, 1--22.

\bibitem{Ze}
\textsc{Zeng} (J.).
\emph{Sur quelques propri\'et\'es de sym\'etrie des nombres de Genocchi},
Discrete Math. {\bf 153} (1996), no. 1-3, 319--333.

\end{thebibliography}
\end{document}